\newtheorem*{rep@theorem}{\rep@title}    
\newcommand{\newreptheorem}[2]{%
\newenvironment{rep#1}[1]{%
 \def\rep@title{#2 \ref{##1}}%
 \begin{rep@theorem}}%
 {\end{rep@theorem}}}
\newtheorem{thm}{Theorem}[section]
\newtheorem{cor}[thm]{Corollary}
\newtheorem{fact}[thm]{Fact}
\newtheorem{theorem}[thm]{Theorem}
\newtheorem{lem}[thm]{Lemma}
\newtheorem{conjecture}[thm]{Conjecture}
\newtheorem{obs}[thm]{Observation}
\theoremstyle{definition}
\newtheorem{defn}[thm]{Definition}
\theoremstyle{remark}
\newtheorem{claim}{Claim}
\numberwithin{equation}{section}
\newcommand{\set}[1]{\left\{#1\right\}}
\newcommand{\M}{\mathcal{M}}
\newcommand{\N}{\mathcal{N}}
\newcommand{\F}{\mathcal{F}}
\newcommand{\A}{\mathcal{A}}
\newcommand{\s}{\mathcal{S}}
\newcommand{\Text}[1]{\text{\textnormal{#1}}}
\newcommand{\spn}{\Text{sp}}
\begin{document}
\setlength{\parskip}{0in}
\title[]{Rainbow sets in the intersection of two matroids}%
\author{Ron Aharoni}%
\address{Department of Mathematics, Technion, Haifa 32000, Israel}%
\email{ra@tx.technion.ac.il}%
\author{Daniel Kotlar}%
\address{Computer Science Department, Tel-Hai College, Upper Galilee 12210, Israel}%
\email{dannykot@telhai.ac.il}%
\author{Ran Ziv}%
\address{Computer Science Department, Tel-Hai College, Upper Galilee 12210, Israel}%
\email{ranziv@telhai.ac.il}%
\setlength{\parskip}{0.075in}

\begin{abstract}

Given sets $F_1, \ldots ,F_n$, a {\em partial rainbow function} is a partial choice function of the sets $F_i$.
A {\em partial rainbow set} is the range of a partial rainbow function. Aharoni and Berger \cite{AhBer} conjectured that if $\M$ and $\N$ are matroids on the same ground set, and $F_1, \ldots ,F_n$ are pairwise disjoint sets of size $n$ belonging to $\M \cap \N$, then there exists a rainbow set of size $n-1$ belonging to $\M \cap \N$. Following an idea of Woolbright and Brower-de Vries-Wieringa, we prove that there exists such a rainbow set of size at least $n-\sqrt{n}$.

\end{abstract}
\maketitle
\section{Introduction}\label{sec1}

As in the abstract, a {\em partial rainbow function} of a family of sets $\F=(F_1,\ldots,F_n)$ is a partial choice function. A {\em partial rainbow set} is the range of a rainbow function, so it is a set consisting of at most one element from each $F_i$, where repeated elements are considered distinct (so, in this terminology a rainbow set is in fact a multiset). A full rainbow set, in which elements are chosen from all $F_i$,  is  called plainly a {\em rainbow set}. Strengthening a conjecture of Brualdi and Stein \cite{BruRys,Stein75}, Aharoni and Berger \cite{AhBer} made the following conjecture:

\begin{conjecture} \label{ab} $n$  matchings of size $n+1$ in a bipartite graph have a rainbow matching (namely, a rainbow set that is a matching).
\end{conjecture}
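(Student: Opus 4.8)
The plan is to recast the statement as a problem about the intersection of two matroids and then run a Woolbright--Brouwer-type augmentation, the same engine that yields the $n-\sqrt{n}$ bound announced in the abstract. First I would encode matchings matroidally. Writing $G$ for the host bipartite graph with sides $A$ and $B$, let $\M$ be the partition matroid on $E(G)$ whose independent sets are the edge sets meeting each vertex of $A$ at most once, and let $\N$ be the analogue for $B$; a set of edges is a matching exactly when it lies in $\M\cap\N$. To honour the convention that equal edges drawn from different matchings count as distinct, I would pass to the disjoint union $\tilde E=\bigsqcup_{i=1}^{n}M_{i}$ and let $\tilde\M$ (respectively $\tilde\N$) be the partition matroid on $\tilde E$ in which two elements are dependent when their images under the projection $\pi\colon\tilde E\to E(G)$ share a vertex of $A$ (respectively of $B$); in particular, two copies of one edge are dependent in both matroids. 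Then the $M_{i}$ become pairwise disjoint sets of size $n+1$ in $\tilde\M\cap\tilde\N$, and a rainbow matching of size $n$ is precisely a rainbow set of size $n$ in $\tilde\M\cap\tilde\N$.

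Second, I would establish a baseline and then try to augment past it. Discarding one edge from each $M_{i}$ and applying the quoted theorem to the resulting $n$ sets of size $n$ already produces a rainbow matching of size at least $n-\sqrt{n}$; the reserved edges are what I would spend to climb to the full value $n$. For that, take a rainbow common independent set $R$ of maximum size $k$, using edges indexed by $I$ with $\abs{I}=k$, and suppose $k<n$. For every unused matching $M_{j}$ ($j\notin I$) and every edge $e\in M_{j}$ the set $R\cup\{e\}$ must leave $\tilde\M\cap\tilde\N$, i.e.\ $e$ meets a vertex covered by $R$. Since $R$ is a matching on $2k$ vertices and each covered vertex blocks at most one edge of $M_{j}$, the crude count already forces $k\ge(n+1)/2$. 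The substance is to replace this by a genuine matroid-intersection augmentation: build alternating walks that switch between $\tilde\M$ and $\tilde\N$ to enlarge $R$, and bound the number of blocked edge--slot incidences summed over all unused matchings simultaneously, using the surplus $(n+1)$-st edge of each set as the extra room for an augmentation.

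The main obstacle is exactly this last step. The counting that gives $n-\sqrt{n}$ is tight when the blocked incidences are distributed evenly among the unused matchings, and driving the deficiency all the way down to $0$ means excluding that extremal pattern---which is the full strength of the Aharoni--Berger conjecture reproduced in the abstract and is open in general. I would therefore expect to settle the statement outright only in cases where the even spread cannot arise, for instance when $G$ has bounded maximum degree or the matchings are pairwise edge-disjoint, and to record the general case as reducing to the two-partition-matroid instance of that conjecture.
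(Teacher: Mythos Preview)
The statement you were asked to prove is Conjecture~\ref{ab}, not a theorem: the paper presents it as an open conjecture of Aharoni and Berger and offers no proof. There is therefore nothing in the paper to compare your attempt against. The paper's contribution is Theorem~\ref{main}, which gives only the $n-\sqrt{n}$ bound for the more general matroidal setting of Conjecture~\ref{abmatroids}; Conjecture~\ref{ab} (and even its weakening Conjecture~\ref{abm}) remains open.

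To your credit, you recognise this. Your final paragraph explicitly concedes that pushing the deficiency from $\sqrt{n}$ down to $0$ ``is the full strength of the Aharoni--Berger conjecture \ldots\ and is open in general,'' and that you would only expect to settle special cases. So what you have written is not a proof but a reformulation: you encode bipartite matchings as the intersection of two partition matroids (precisely how the paper casts Conjecture~\ref{abm} as the partition-matroid instance of Conjecture~\ref{abmatroids}), recover the $n-\sqrt{n}$ baseline from Theorem~\ref{main}, and then observe that the alternating-path machinery does not by itself close the gap. That assessment is correct; the gap is genuine and is the entire content of the conjecture.

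One minor remark: the step where you discard one edge from each $M_i$ to invoke Theorem~\ref{main} buys nothing toward the full conjecture---the ``reserved'' edges are never used in a way that improves on the $n-\sqrt{n}$ argument, since the obstruction (the even spread of blocked incidences you describe) is not broken by having one extra edge per colour. The honest summary is simply that Conjecture~\ref{ab} is open and your outline does not resolve it.
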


This conjecture easily implies:
\begin{conjecture} \label{abm} $n$  matchings of size $n$ in a bipartite graph have a partial rainbow matching of size $n-1$.
\end{conjecture}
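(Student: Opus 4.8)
The plan is to verify the implication claimed in the sentence preceding the statement: granting Conjecture~\ref{ab} for all values of its parameter, I would derive Conjecture~\ref{abm} by simply discarding one of the given matchings. The point is that $n$ matchings of size $n$ contain, in particular, $n-1$ matchings of size $n=(n-1)+1$, which is exactly the hypothesis of Conjecture~\ref{ab} with its parameter set to $n-1$.

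In detail: let $M_1,\dots,M_n$ be matchings of size $n$ in a bipartite graph. The case $n=1$ is trivial, the empty matching being a partial rainbow matching of size $n-1=0$, so assume $n\ge 2$ and ignore $M_n$. Each of $M_1,\dots,M_{n-1}$ has size $n=(n-1)+1$, so Conjecture~\ref{ab}, applied with $n-1$ in place of $n$, produces a rainbow matching $R$ choosing one edge from each of $M_1,\dots,M_{n-1}$. This $R$ is a matching of size $n-1$; since it meets each $M_i$ (for $i\le n$) in at most one edge and misses $M_n$ entirely, it is a partial rainbow matching of $(M_1,\dots,M_n)$ of size $n-1$, which is what Conjecture~\ref{abm} asserts.

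I do not expect any genuine obstacle here: this is the ``easily'' in ``easily implies'', and the only thing to keep straight is that Conjecture~\ref{ab} must be invoked at parameter $n-1$ rather than $n$, together with the degenerate case $n=1$. It is worth flagging, though, that the luxury of throwing away a whole matching is precisely what is \emph{not} available in the unconditional, matroidal setting described in the abstract; there the $n-\sqrt{n}$ bound will instead have to be obtained by building the rainbow common independent set step by step and bounding, via an augmenting-path and counting argument in the style of Woolbright and Brower-de Vries-Wieringa, the number of sets that are forced to contribute nothing.
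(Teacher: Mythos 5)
Your derivation is exactly the intended content of the paper's remark that Conjecture~\ref{ab} ``easily implies'' Conjecture~\ref{abm}: discard one matching, apply Conjecture~\ref{ab} at parameter $n-1$ to the remaining $n-1$ matchings of size $(n-1)+1$, and read off the resulting rainbow matching of size $n-1$ as a partial rainbow matching of the original family. The paper offers no further proof of this conjecture (it remains open unconditionally), so your conditional argument is correct and coincides with the paper's approach.
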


The Brualdi-Stein conjecture is that every Latin square of order $n$  possesses a partial transversal of size $n-1$, namely $n-1$ entries lying in different rows and columns, and containing different symbols. (This is a natural variation on a conjecture of Ryser \cite{Ryser67}, that an odd Latin square has a full transversal). Each of the $n$ rows of a Latin square can be considered in a natural way as a matching of size $n$ between columns and symbols, and applying Conjecture \ref{abm} to these matchings yields the Brualdi-Stein conjecture.

Lower bounds of order $n-o(n)$ were proved in both problems. Hatami and Shor \cite{HatShor08} proved that in a Latin square of order $n$ there exists a partial transversal of size at least $n-11.053\log^2n$.
 Woolbright  \cite{Wool78} and independently Brouwer, de Vries and Wieringa \cite{brouwer78} proved (in effect) that $n$ matchings in a bipartite graph have a partial rainbow matching of size at least $n-\sqrt{n}$.

Aharoni and Berger \cite{AhBer} extended Conjecture \ref{abm} to matroids, as follows:

\begin{conjecture} \label{abmatroids} Let $\M$ and $\N$ be two matroids on the same vertex set. Any $n$ pairwise disjoint sets of size $n$, belonging to $\M \cap \N$, have a partial rainbow set of size $n-1$ belonging to $\M \cap \N$.
\end{conjecture}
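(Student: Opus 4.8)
The plan is to aim directly for the bound $n-1$ by an augmenting-path argument inside the matroid-intersection exchange structure, treating the rainbow constraint as a third, partition matroid imposed by the color classes $F_1,\ldots,F_n$. Since bipartite matchings are precisely the common independent sets of two partition matroids, Conjecture \ref{abmatroids} specializes to Conjecture \ref{abm}, so the natural route is to lift the augmenting idea of Woolbright \cite{Wool78} and Brouwer--de Vries--Wieringa \cite{brouwer78} from matchings to $\M\cap\N$. I would start from a maximum rainbow common independent set $R\in\M\cap\N$, assume for contradiction that $\abs{R}\le n-2$, and try to produce a rainbow common independent set of strictly larger size, the eventual target being $n-1$.

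The engine is Edmonds' exchange digraph $D_{\M,\N}(R)$ on the ground set $\bigcup_i F_i$: arcs record the single-element swaps that preserve independence in $\M$ or in $\N$, the sources are the elements $y\notin R$ with $R+y\in\M$ and the sinks those with $R+y\in\N$. A shortest chordless source--sink path $P$ yields $R\triangle V(P)\in\M\cap\N$ of size $\abs{R}+1$. Since every $F_i$ lies in $\M\cap\N$ and has size $n$, a maximum common independent set of $\M\cap\N$ has size at least $n>\abs{R}$, so \emph{some} augmenting path always exists when $\abs{R}\le n-2$. Moreover at least two color classes are unused by $R$, and each is itself a common independent set of size $n$; this abundance should let us choose the source and the sink of an augmenting path in unused colors. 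The key step is then to argue that among the available augmenting paths one can be selected whose symmetric difference with $R$ introduces no repeated color --- equivalently, that the color partition matroid does not block every exchange.

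The main obstacle is exactly this color-bookkeeping step. Along an augmenting path the elements swapped out and in may belong to colors already represented in $R$, and a single color may recur on several interior vertices of every shortest path; controlling these coincidences is what limits the argument. A crude count shows that an augmentation is available whenever the number of colors that can simultaneously obstruct all shortest paths is $o(\abs{R})$, and tracking how many colors get frozen in this way is precisely the mechanism behind the weaker estimate $n-\sqrt n$ of the abstract: each forced color conflict can cost one unit of deficiency, and the conflicts accumulate to about $\sqrt n$. Pushing the deficiency all the way down to $1$ requires replacing this local exchange count with a global structural statement --- for instance a connectivity bound on the complex of common independent sets of $\M\cap\N$, in the spirit of the topological method of Aharoni and Berger \cite{AhBer} --- guaranteeing a color-respecting augmenting path whenever $\abs{R}\le n-2$. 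I expect establishing that global statement to be the crux, and indeed it is the content that keeps Conjecture \ref{abmatroids} open.
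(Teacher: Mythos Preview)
The statement you are addressing is Conjecture~\ref{abmatroids}, and the paper does \emph{not} prove it; it proves only the weaker Theorem~\ref{main} with the bound $n-\sqrt{n}$. So there is no ``paper's own proof'' to compare against, and your proposal is not a proof either: the decisive step --- ``among the available augmenting paths one can be selected whose symmetric difference with $R$ introduces no repeated color'' --- is asserted but never established, and you yourself flag it as the obstacle in your final paragraph. Concretely, the existence of an augmenting path in the Edmonds digraph $D_{\M,\N}(R)$ only gives a larger set in $\M\cap\N$; nothing in the digraph structure forces the interior swaps to respect the color partition, and having two spare colors for the endpoints does not help with the interior. Your appeal to a hoped-for ``global structural statement'' or a topological connectivity bound is a wish, not an argument.

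Where your sketch does align with the paper is in the diagnosis: the paper's proof of Theorem~\ref{main} also runs an augmenting-path mechanism (the colorful alternating paths of Definition~\ref{def1}), and the $\sqrt{n}$ loss arises precisely from the bookkeeping you describe --- each unused color $F_{t+i}$ yields a set $A_i$ of size at least $i\delta$ inside $\spn_\M(R)$, and the inequality $t\ge\delta^2$ is exactly the ``conflicts accumulate to about $\sqrt{n}$'' phenomenon. So your informal reading of why $n-\sqrt{n}$ falls out is correct in spirit, but for the conjecture itself you have identified the gap rather than closed it.
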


Conjecture \ref{abm} is the special case where both $\M$ and $\N$ are partition matroids. (Here the term \emph{partition matroid} refers to a direct sum of uniform matroids, each of rank 1.) The aim of this paper is to prove the parallel of the  Woolbright-Brower-de Vries-Wieringa result for Conjecture \ref{abm}. We shall prove:

\begin{theorem}\label{main}
Any $n$ pairwise disjoint sets of size $n$ belonging to $\M \cap \N$ have a partial rainbow set of size at least $n-\sqrt{n}$ belonging to $\M \cap \N$.
\end{theorem}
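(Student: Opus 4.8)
The strategy is to adapt the Woolbright / Brouwer–de Vries–Wieringa argument from the bipartite-graph setting to the setting of the intersection of two matroids, where "matching" is replaced by "common independent set" of $\M$ and $\N$. First I would set up the right notion of an \emph{augmenting structure}. Suppose we have a partial rainbow common independent set $R$ of size $r < n - \sqrt n$, using elements from some subfamily of the $F_i$. Since each $F_i$ is itself a common independent set of size $n$, and $R$ misses at least $\sqrt n + 1$ of the sets $F_i$ entirely, a counting/pigeonhole argument should produce a large reservoir of "fresh" elements available for augmentation. The key is the matroid exchange machinery: if $R \in \M \cap \N$ and $R$ is not maximum, then (by the matroid intersection theorem, or directly by the symmetric exchange properties of $\M$ and $\N$) there is an augmenting path in the exchange digraph that lets us enlarge $R$ to $R'$ of size $r+1$, still in $\M \cap \N$. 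The delicate point is to perform this augmentation while controlling which $F_i$'s get used, so that we can keep the rainbow property and iterate.

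The core of the proof, as in Woolbright's argument, will be a potential-function or greedy-extremal analysis. I would fix a partial rainbow common independent set $R$ of maximum size $r$, aiming to show $r \ge n - \sqrt n$. Let $I = \{i : F_i \text{ contributes to } R\}$, so $|I| = r$, and let $J$ be the complementary index set, $|J| = n - r$. For each $j \in J$ consider $F_j$; because $R$ is maximal, no element of $F_j$ can be added to $R$ directly, which forces each $x \in F_j$ to create a "blocking" circuit in $\M$ or in $\N$ together with $R$. The heart of the matter is a double-counting: on one hand $\sum_{j\in J}|F_j| = (n-r)n$ is large; on the other hand, the elements of these sets are "explained" by the at most $2r$ fundamental circuits of elements of $R$ (one in $\M$, one in $\N$), plus overlaps $R \cap F_j$. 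One then builds, by repeatedly following exchange relations, an augmenting structure of a special kind — essentially a sequence of elements $x_0 \in F_{j_0}, x_1, x_2, \ldots$ where each $x_k$ lies in the $\M$- or $\N$-circuit of some $R$-element, and the process must terminate in a genuine augmentation unless $n - r \le \sqrt n$, i.e. $r \ge n - \sqrt n$. Getting the quantitative bound exactly $\sqrt n$ (rather than, say, $2\sqrt n$) is where one must be careful: the trick is that following a circuit in $\M$ and then a circuit in $\N$ alternately, and charging each newly reached $F_j$ only once, yields a bound governed by the inequality $(n-r)^2 \le$ (total number of available slots) $\le n\cdot$(something), forcing $(n-r)^2 \lesssim n$.

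**Main obstacle.** The principal difficulty is that, unlike in the bipartite-graph (two partition matroids) case, the "alternating path" structure in the intersection of two general matroids is more rigid: an exchange in $\M$ can destroy independence in $\N$ and vice versa, so a single augmenting step may require a long alternating walk in the combined exchange digraph, and it is not a priori clear that the walk stays within the union of the $F_j$ for $j \in J$ we want to use. I expect the crux to be establishing a lemma of the form: \emph{if $R \in \M\cap\N$ is maximal among partial rainbow sets and $|R| = r$, then the set of indices reachable by alternating $\M/\N$-exchanges from the unused $F_j$'s either yields an augmentation or has size at most $\sqrt n$}, and then converting this reachability bound into the clean numerical estimate $n - r \le \sqrt n$. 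Everything else — the matroid intersection exchange lemmas, the pigeonhole on $\sum_{j}|F_j|$, and the final arithmetic — is expected to be routine once that lemma is in place.
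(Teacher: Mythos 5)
Your proposal is a plan rather than a proof, and the two components that actually carry the argument are both missing. First, the quantitative mechanism: the bound $t > n-\sqrt n$ does not come from double-counting $\sum_{j\in J}|F_j| = (n-r)n$ against ``$2r$ fundamental circuits.'' It comes from constructing, for $\delta = n-t$, a nested-in-spirit sequence of sets $A_1,\ldots,A_\delta$ with $A_i\subseteq F_{t+i}$, $A_i\subseteq \spn_\M(R)$ and $|A_i|\ge i\delta$, together with pairwise disjoint witness sets $R_i\subseteq R$ with $|R_i|\ge\delta$ (these witness sets are what force each $A_{i+1}$ to be larger than $A_i$ by $\delta$). The punchline is then a single inequality: $A_\delta$ is an independent set of $\M$ of size at least $\delta^2$ that is spanned by $R$ in $\M$, so $\delta^2\le |R| = t$, whence $t> n-\sqrt n$. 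Your proposed inequality ``$(n-r)^2\le$ total number of available slots $\le n\cdot(\text{something})$'' is not this argument, and as stated it is not clear it closes; you need the linear growth $|A_i|\ge i\delta$ of $\M$-spanned independent subsets of successive unused $F_j$'s, which your sketch never produces.

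Second, you correctly identify the crux --- showing that the reachable/constructed sets stay inside $\spn_\M(R)$, i.e.\ that no augmentation is possible --- but you leave it entirely unresolved, and it is genuinely delicate. The paper proves it by a ``colorful alternating path'' argument: starting from a hypothetical $a\in A_m$ with $R+a\in\M$, one alternately removes elements $r_i\in R$ and adds elements $b_i$ from the $A_j$'s, choosing each $r_{k+1}$ from the $R_p$ of \emph{minimal} index meeting $C_\N(R,b_k)$ and each $b_{k+1}$ from the $A_l$ of \emph{minimal} index whose elements have $r_{k+1}$ in their $\M$-support. These minimality choices give strict monotonicity of the indices involved, which (i) guarantees termination in an augmenting path contradicting maximality of $R$, and (ii) via a circuit-elimination lemma (the paper's Lemma~\ref{lem3}) ensures that the supports computed with respect to $R$ remain valid with respect to the partially exchanged sets, so that independence and spans are preserved in both matroids simultaneously. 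This is exactly the interference problem you name as the main obstacle; without an argument of this kind the proof does not exist. In short: right general direction, but both the counting skeleton and the key lemma still need to be supplied.
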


\section{Matroid preliminaries}
Throughout the paper we shall use the notation  $A+x$ for $A\cup\set{x}$ and $A-x$ for $A\setminus\set{x}$.

Recall that
a collection $\M$ of subsets of a set $S$ is a \emph{matroid} if it is hereditary and it satisfies an augmentation property: If $A, B\in\M$ and $|B|>|A|$, then there exists $x\in B\setminus A$ such that $A+x\in \M$. Sets in $\M$ are called {\em independent} and sets not belonging to $\M$ are called {\em dependent}.
  A maximal independent set is called a \emph{basis}.
An element $x\in \s$ is \emph{spanned} by $A$ if either $x\in A$ or $I+x\not\in \M$ for some independent set $I\subseteq A$.
The set of elements that are spanned by $A$ is denoted by $\spn(A)$, or $\spn_\M(A)$ if the identity of the matroid $\M$ is not clear from the context.
A \emph{circuit} is a minimal dependent set. We shall use some basic facts on matroids, that can be found, for example, in the books of  Oxley~\cite{Oxley11} and Welsh~\cite{Welsh76}.

\begin{fact}\label{fact1}
If $I$ is independent and $I+x$ is dependent, then there exists a unique minimal subset $C_\M(I,x)$ of $I$ that spans $x$. \end{fact}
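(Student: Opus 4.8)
The plan is to prove existence and uniqueness separately, translating the spanning condition into a statement about circuits and then invoking the circuit elimination axiom, which I take to be among the standard facts available from \cite{Oxley11,Welsh76}. I would begin with a reformulation: for any $J\subseteq I$, since $J$ is itself independent (by heredity) and $x\notin I\supseteq J$, the definition of spanning collapses to the clean equivalence that $J$ spans $x$ if and only if $J+x$ is dependent. Indeed, if $J+x$ is dependent then $J$ itself is the required independent subset, and conversely any independent $J'\subseteq J$ with $J'+x$ dependent forces $J+x\supseteq J'+x$ to be dependent by heredity. The hypothesis that $I+x$ is dependent then says exactly that $I$ spans $x$, so the family of subsets of $I$ spanning $x$ is nonempty and, being finite, has an inclusion-minimal member; this already settles existence.

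For uniqueness I would route everything through the circuit structure of $I+x$. First I would observe that every circuit contained in $I+x$ must contain $x$, since a circuit avoiding $x$ would lie inside $I$, contradicting the independence of $I$. Next I would show that $I+x$ contains exactly one circuit: if $C_1\neq C_2$ were two circuits in $I+x$, both would contain $x$, and circuit elimination would produce a circuit inside $(C_1\cup C_2)-x\subseteq I$, once more contradicting that $I$ is independent. I would then name the unique circuit $C$ and define $C_\M(I,x):=C-x\subseteq I$.

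It remains to check that $C-x$ is exactly the unique minimal subset spanning $x$. It spans $x$, because $C=(C-x)+x$ is dependent while $C-x$, being a proper subset of a circuit, is independent. It is minimal, because any proper subset $J\subsetneq C-x$ gives $J+x\subsetneq C$, a proper subset of a circuit and hence independent, so $J$ does not span $x$. Finally, for an arbitrary minimal spanning $J\subseteq I$, the dependent set $J+x$ contains a circuit, which must contain $x$ and sits inside $I+x$; by the uniqueness of $C$ that circuit equals $C$, whence $C-x\subseteq J$, and minimality of $J$ forces $J=C-x$. The only genuinely nontrivial ingredient is the uniqueness of the circuit in $I+x$, so the whole argument hinges on having circuit elimination at hand; everything else is bookkeeping with the definitions of \emph{independent}, \emph{dependent}, and \emph{spans}.
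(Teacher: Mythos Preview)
Your argument is correct. The paper itself does not prove this statement: it lists it as Fact~\ref{fact1} among the ``basic facts on matroids, that can be found, for example, in the books of Oxley~\cite{Oxley11} and Welsh~\cite{Welsh76},'' with no accompanying proof. What you have written is precisely the standard textbook derivation of the fundamental-circuit property: the reformulation ``$J$ spans $x$ iff $J+x$ is dependent'' is valid for independent $J\subseteq I$, existence follows by finiteness, and uniqueness is exactly the unique-circuit property of $I+x$, obtained from circuit elimination (the paper's Fact~\ref{fact3}). There is nothing to compare against, and nothing to fix.
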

We shall call $C_\M(I,x)$ the \emph{$\M$-support} of $x$ in $I$.

\begin{fact}\label{fact2}
Let $A\in\M$, $x\in\spn(A)$, and $a\in C_\M(A,x)$. Then $A+x-a\in\M$ and $\spn(A+x-a)=\spn(A)$.
\end{fact}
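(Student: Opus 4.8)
The whole statement reduces to properties of the fundamental circuit of $x$ with respect to $A$, so I would set this up first. Since $a\in C_\M(A,x)$, Fact~\ref{fact1} is being invoked, which means $A+x$ is dependent and $x\notin A$. By the minimality built into the definition of the $\M$-support, $C:=C_\M(A,x)+x$ is a \emph{minimal} dependent subset of $A+x$, i.e.\ a circuit: adjoining $x$ to $C_\M(A,x)$ creates dependence, while deleting any element of $C_\M(A,x)$ (or $x$ itself) leaves an independent set. The key preliminary claim I would prove is that $C$ is the \emph{only} circuit contained in $A+x$. Any circuit $C'\subseteq A+x$ must contain $x$, since otherwise $C'\subseteq A$ would be a circuit inside the independent set $A$; then applying the circuit-elimination axiom to the common element $x$ of two distinct circuits $C,C'$ would produce a circuit inside $(C\cup C')-x\subseteq A$, again contradicting independence of $A$. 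Hence the fundamental circuit is unique.

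Granting uniqueness, the independence claim $A+x-a\in\M$ follows by contradiction. If $A+x-a$ were dependent it would contain a circuit $C'$; as $A-a\subseteq A$ is independent, $C'$ must contain $x$, so $C'$ is a circuit of $A+x$ through $x$ and therefore $C'=C$. But $a\in C_\M(A,x)\subseteq C$, while $a\notin A+x-a\supseteq C'$, which is impossible. Thus $A+x-a\in\M$. This contradiction step, resting on the uniqueness of $C$ (and hence on circuit elimination), is the one genuinely matroidal move; I expect it to be the main obstacle, since everything surrounding it is bookkeeping.

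For the span equality I would use only that $\spn$ is a closure operator, i.e.\ monotone and idempotent. Because $A\subseteq\spn(A)$ and $x\in\spn(A)$, we have $A+x-a\subseteq\spn(A)$, and monotonicity together with idempotence give $\spn(A+x-a)\subseteq\spn(\spn(A))=\spn(A)$. For the reverse inclusion it suffices to verify $A\subseteq\spn(A+x-a)$: plainly $A-a\subseteq A+x-a\subseteq\spn(A+x-a)$, and since $a$ lies on the circuit $C$, the set $C-a=\set{x}\cup(C_\M(A,x)-a)$ is independent with $(C-a)+a=C$ dependent, so $a\in\spn(C-a)$; as $C-a\subseteq A+x-a$, monotonicity yields $a\in\spn(A+x-a)$ as well. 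Hence $A\subseteq\spn(A+x-a)$, so $\spn(A)\subseteq\spn(A+x-a)$, and combining the two inclusions gives $\spn(A+x-a)=\spn(A)$.
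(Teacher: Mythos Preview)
Your proof is correct and is the standard textbook argument for this basic exchange property. Note, however, that the paper does not actually prove Fact~\ref{fact2}: it is listed among ``some basic facts on matroids, that can be found, for example, in the books of Oxley~\cite{Oxley11} and Welsh~\cite{Welsh76}'' and is quoted without proof. So there is no proof in the paper to compare against; your argument simply supplies what the authors take as background. The only comment worth making is that your uniqueness-of-the-fundamental-circuit step uses the weak circuit elimination axiom, which is a special case of the paper's Fact~\ref{fact3}, so everything you invoke is already available in the paper's preliminaries.
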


\begin{fact}\label{fact3}
If $C_1$ and $C_2$ are circuits with $e\in C_1\cap C_2$ and $f\in C_1\setminus C_2$ then there exists a circuit $C_3$ such that $f\in C_3\subseteq (C_1\cup C_2)-e$.
\end{fact}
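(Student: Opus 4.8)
The plan is to mimic the Woolbright–Brouwer–de Vries–Wieringa argument for bipartite rainbow matchings, but carried out in the language of matroid intersection using the circuit/support machinery of Facts~\ref{fact1}--\ref{fact3}. Suppose for contradiction that the largest partial rainbow set $R$ lying in $\M \cap \N$ has size $d$, and aim to show $d \geq n - \sqrt{n}$, i.e. that if $d < n - \sqrt{n}$ we reach a contradiction. Since $R$ is maximal of size $d$, it uses elements from $d$ of the sets $F_i$, leaving $n-d$ sets ``unused''. The governing idea is a counting/augmentation scheme: I would try to build, starting from $R$, a sequence of alternating modifications (swapping an element of $R$ for an unused element of some $F_i$, using Fact~\ref{fact2} to preserve independence in one matroid while tracking the effect on the other) and show that if $d$ is too small then one of these modifications actually enlarges $R$, contradicting maximality.

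Concretely, I would set up the two relevant closure operators $\spn_\M$ and $\spn_\N$ restricted to $R$. For a currently unused set $F_i$, every element $y \in F_i$ must be spanned by $R$ in at least one of the two matroids (otherwise $R+y$ is independent in both and $R$ was not maximal). This dichotomy lets me color/classify the roughly $(n-d)\cdot n$ pairs $(i,y)$ with $y$ in an unused $F_i$ according to which matroid spans them, and then use the supports $C_\M(R,y)$ and $C_\N(R,y)$ (Fact~\ref{fact1}) to locate, for each such $y$, an element $a = a(y) \in R$ that could in principle be exchanged out via Fact~\ref{fact2}. The heart of the matter is a pigeonhole estimate: since the $F_i$ are disjoint and of size $n$, and $R$ has only $d$ elements, some element $a \in R$ must serve as the support-target for many pairs $y$, across many distinct unused sets. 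I would then attempt an exchange at such a heavily-hit $a$, trading $a$ for one of the $y$'s so as to keep independence in the matroid that was doing the spanning while freeing up room in the other matroid.

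The delicate and decisive step — and the main obstacle — is controlling both matroids simultaneously during an exchange. Fact~\ref{fact2} guarantees that replacing $a \in C_\M(R,y)$ by $y$ keeps $\M$-independence and, crucially, preserves $\spn_\M$; but I must simultaneously ensure the resulting set is still $\N$-independent, or arrange a longer alternating chain whose net effect is a strict gain. This is where the $\sqrt{n}$ threshold should enter: a single exchange may create a new dependency in $\N$, so I would iterate, each step governed by Fact~\ref{fact3} to splice circuits together and relocate the ``defect'' to a fresh unused set, and I would bound the number of iterations by a quantity of order $\sqrt{n}$. The counting has to be arranged so that either the chain terminates in a genuine augmentation of $R$ (contradiction) or else it exhibits more than $d(n-d)$ distinct spanning relations forced into fewer than that many slots, which is impossible once $d < n-\sqrt{n}$; balancing these two quantities is precisely what yields the bound $n - \sqrt{n}$.

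I expect the circuit-elimination Fact~\ref{fact3} to be the technical linchpin for the chain-splicing, and the disjointness and common size $n$ of the $F_i$ to be what powers the pigeonhole step. The cleanest route may be to phrase the whole argument as a statement about a bipartite-like auxiliary structure on $R$ versus the unused sets, so that the original Woolbright–Brouwer counting transfers with the two matroid closures playing the role that the two sides of the bipartite graph played in the matching case.
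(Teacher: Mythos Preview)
Your proposal does not address the statement at all. Fact~\ref{fact3} is the strong circuit elimination axiom for matroids: given two circuits $C_1,C_2$ with $e\in C_1\cap C_2$ and $f\in C_1\setminus C_2$, there is a circuit $C_3$ with $f\in C_3\subseteq (C_1\cup C_2)-e$. This is a standard structural fact about matroids; the paper does not prove it but simply cites it as background (``basic facts on matroids, that can be found, for example, in the books of Oxley and Welsh''). A proof would proceed purely inside a single matroid, e.g.\ via the rank function or by iterating weak circuit elimination; it has nothing to do with rainbow sets, two matroids, the family $\F$, or the quantity $n-\sqrt{n}$.

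What you have actually sketched is an attempt at Theorem~\ref{main}, the main result of the paper. So the gap is not a missing step but a complete mismatch between the target statement and the argument offered. Even read as a sketch of Theorem~\ref{main}, your outline is vague at the decisive point: you gesture at a pigeonhole count and an alternating chain, but you never specify the invariant that forces termination or the inequality that produces $\sqrt{n}$. The paper's actual proof builds explicit sets $A_i\subseteq F_{t+i}$ with $|A_i|\ge i\delta$ and $A_i\subseteq\spn_\M(R)$, and the $\sqrt{n}$ comes from $|A_\delta|\ge\delta^2\le t$; the alternating-path machinery (Lemma~\ref{lem2}) is used only to certify $A_i\subseteq\spn_\M(R)$, not to drive a pigeonhole count. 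In any case, none of this is relevant to Fact~\ref{fact3}.
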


The following is an immediate corollary of the augmentation property:
\begin{fact}\label{fact4}
Let  $I,J$ be independent sets in $\M$. If $|I|<|J|$, then there exists $J_1\subseteq J\setminus I$ such that $I\cup J_1\in\M$ and $|I\cup J_1|=|J|$.
\end{fact}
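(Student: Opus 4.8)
The plan is to obtain $J_1$ by iterating the augmentation property, adding one element of $J$ at a time to $I$ while staying independent, and stopping the moment the size reaches $|J|$. Concretely, I would build a chain of independent sets $I = A_0 \subseteq A_1 \subseteq \cdots$, maintaining the invariant that each $A_k$ is independent with $I \subseteq A_k \subseteq I \cup J$.

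For the inductive step, suppose $A_k$ has been constructed with $|A_k| < |J|$. Since $A_k, J \in \M$ and $|A_k| < |J|$, the augmentation property supplies an element $x \in J \setminus A_k$ with $A_k + x \in \M$; set $A_{k+1} = A_k + x$. The one point to verify is that the added elements never come from $I$: because $I \subseteq A_k$, any $x \in J \setminus A_k$ automatically satisfies $x \notin I$, hence $x \in J \setminus I$. Thus the property $A_{k+1} \setminus I \subseteq J \setminus I$ is preserved along the chain, and $A_{k+1}$ again lies between $I$ and $I \cup J$.

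Each step increases the size by exactly one, so after $|J| - |I|$ steps we reach a set $A = A_{|J|-|I|}$ with $|A| = |J|$, $A \in \M$, $I \subseteq A$, and $A \setminus I \subseteq J \setminus I$. Setting $J_1 = A \setminus I$ then gives $J_1 \subseteq J \setminus I$ and $I \cup J_1 = A \in \M$; since $I$ and $J_1$ are disjoint, $|I \cup J_1| = |I| + (|J| - |I|) = |J|$, which is exactly the assertion.

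I do not anticipate a genuine obstacle, since the statement is precisely the iterated form of the single-step augmentation axiom. The only subtleties worth recording explicitly are the bookkeeping that forces every augmenting element to lie in $J \setminus I$ rather than merely in $J$, and the observation that halting once the size reaches $|J|$ (rather than continuing to a maximal independent subset of $I \cup J$, which could be strictly larger) is what pins $|I \cup J_1|$ down to $|J|$ exactly. The whole argument could equivalently be framed as an induction on $|J| - |I|$.
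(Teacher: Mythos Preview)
Your proof is correct and is exactly what the paper intends: the paper does not actually give a proof of this fact, but simply declares it ``an immediate corollary of the augmentation property,'' which is precisely the iterated-augmentation argument you spell out. Your careful bookkeeping (that each new element lands in $J\setminus I$, and that one stops at size $|J|$) fills in the routine details the paper omits.
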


\begin{defn}\label{def1}
Let $\M$ and $\N$ be two matroids on the same ground set $S$. We call a set $F\subseteq S$ an \emph{independent matching} if $F\in \M\cap \N$.
A rainbow set for a family $\F=\{F_1, F_2, \ldots,F_n\}$  of independent matchings is called a {\em  rainbow independent matching} if it belongs to $\M \cap \N$.
\end{defn}

\section{Proof of Theorem \ref{main}}\label{sec2}

Let $\F=(F_1, \ldots ,F_n)$ be a family of disjoint sets belonging to $\M \cap \N$. Let $R$ be a partial rainbow matching for $\F$ of maximal size. Let $t=|R|$ and $\delta=n-t$. Without loss of generality we may assume that $|R\cap F_i|=1$ for $i=1,\ldots,t$.

We shall construct a sequence of sets $(A_1,\ldots,A_\delta)$ such that for all $i=1,\ldots,\delta$ the following holds:
\begin{equation}\label{eq3:1}
A_i\subseteq F_{t+i},
\end{equation}
\begin{equation}\label{eq3:2}
A_i\subseteq \spn_\M(R),
\end{equation}
and
\begin{equation}\label{eq3:3}
|A_i|\geq i\delta.
\end{equation}

Suppose that we  succeed in constructing such a sequence.
By \eqref{eq3:1}  $A_\delta\in\M$ and by \eqref{eq3:2}  $A_\delta\subseteq \spn_\M(R)$. By \eqref{eq3:3}, applied to $i=\delta$, we therefore have $t=|R|\ge|A_\delta|\ge \delta^2$. Clearly, we may assume that $t<n$. Since $\delta=n-t$, it  follows that $t>n-\sqrt{n}$, as stated in the theorem.

{\em Construction of the sets $A_i$}.
We construct the sets $A_i$ inductively, associating with them sets $R_i$, so that  $R_1,\ldots,R_\delta$ are disjoint,  $R_i\subseteq R$ and $|R_i|\ge \delta$ for all $i=1,\ldots,\delta$.
Since $|F_{t+1}|=n$ and $|R|=t$, there exists, by Fact~\ref{fact4}, a set $A_1\subseteq F_{t+1}\setminus R$ such that $|A_1|=\delta$ and $R\cup A_1\in\N$.
By the maximality property of $R$ we have  $A_1\subseteq\spn_\M(R)$. Since $|A_1|=\delta$ and $|R|=t$, there exists, again by Fact~\ref{fact4}, a subset $R'\subset R$ of size $t-\delta$ such that $A_1\cup R'\in \M$ and $|A_1\cup R'|=t$. Let $R_1=R\setminus R'$. We have $R\setminus R_1\cup A_1\in \M$ and $|R_1|=\delta$.

For the inductive step, assume that   $R_1,R_2,\ldots,R_j$ are pairwise disjoint subsets of $R$, each of size at least $\delta$, and  $A_1,A_2,\ldots,A_j$ satisfy the conditions (\ref{eq3:1}), (\ref{eq3:2}) and (\ref{eq3:3}), for $i=1,\ldots,j$.
Denote $R^{k}=R\setminus\cup_{i=1}^{k-1}R_i$  for $k=2,\ldots$.
Notice that $|R^{j+1}|\leq t-j\delta$. Since $F_{t+j+1}\in\N$ and $|F_{t+j+1}|=n$ it follows from Fact~\ref{fact4} that there exists $A_{j+1}\subseteq F_{t+j+1}$ such that $R^{j+1}\cup A_{j+1}\in\N$ and $|R^{j+1}\cup A_{j+1}|=n$.  We have $|A_{j+1}|=n-|R^{j+1}|\geq n-(t-j\delta)=(j+1)\delta$. We see that $A_{j+1}$ satisfies (\ref{eq3:1}) and (\ref{eq3:3}). The following lemma implies that (\ref{eq3:2}) also holds for $A_{j+1}$.

\begin{lem}\label{lem2}
If $j<\delta$ then $A_{j+1}\subseteq\spn_\M(R)$.
\end{lem}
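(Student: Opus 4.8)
The natural approach, in the spirit of Woolbright and of Brouwer--de Vries--Wieringa, is a proof by contradiction through an augmenting ``alternating chain''. Assume the conclusion fails: there is $x\in A_{j+1}$ with $x\notin\spn_\M(R)$, so that $R+x\in\M$. Since $A_{j+1}\subseteq F_{t+j+1}$ and $j<\delta$ forces $t+j+1\le n$, the set $F_{t+j+1}$ is one of our $n$ sets and $x$ carries a colour unused by $R$; thus $R+x$ is a rainbow set of size $t+1$ lying in $\M$. If moreover $R+x\in\N$, this already contradicts the maximality of $R$, so we may assume $R+x\notin\N$. The task is then to repair the $\N$--dependence while preserving $\M$--independence and the rainbow property, so as to produce a rainbow set of size $t+1$ in $\M\cap\N$.

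I would first record three properties of the data $A_1,\dots,A_j$ and $R_1,\dots,R_j$ already built. \emph{(i)} $A_i\subseteq\spn_\M(R)$ for every $i\le j$: this is Lemma~\ref{lem2} for smaller indices, available by induction. \emph{(ii)} $R^{i}\cup A_i\in\N$ for every such $i$ (this is how $A_i$ was chosen), so no element of $A_i$ lies in $\spn_\N(R^{i})$; in particular $R\cup A_1\in\N$. \emph{(iii)} Each $R_i$ was cut from $R$ by extending $A_i$, via a set $R''\subseteq R$, to a basis $A_i\cup R''$ of $\spn_\M(R)$ and taking $R_i\subseteq R\setminus R''$; consequently $R_i\subseteq\bigcup_{e\in A_i}C_\M(R,e)$, for if some $r\in R_i$ avoided all these supports then $A_i\subseteq\spn_\M(R-r)$, and the independent set $A_i\cup R''$ of size $t$ would lie in the flat $\spn_\M(R-r)$ of rank $t-1$ --- impossible. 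Hence, by Fact~\ref{fact2}, every $r\in R_i$ may be exchanged for a suitable $e\in A_i$: $R-r+e\in\M$ and $\spn_\M(R-r+e)=\spn_\M(R)$.

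The chain then proceeds as follows. By Fact~\ref{fact1}, the $\N$--dependence of $R+x$ gives an $\N$--circuit $C\subseteq R+x$ with $x\in C$, so $C-x\subseteq R$; since $R^{j+1}+x\subseteq R^{j+1}\cup A_{j+1}\in\N$, the set $C-x$ is not contained in $R^{j+1}$ and so meets some $R_{i_1}$ with $i_1\le j$. Choose $r_1\in C\cap R_{i_1}$ and, by \emph{(iii)}, an $e_1\in A_{i_1}$ with $R-r_1+e_1\in\M$ and $\spn_\M(R-r_1+e_1)=\spn_\M(R)$; then $x$ is still outside this span, so $R-r_1+e_1+x\in\M$, and this set is rainbow of size $t+1$ (the new colour $t+i_1$ of $e_1$ is one of $1,\dots,n$, as $i_1\le j<\delta$). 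If $R-r_1+e_1+x\in\N$ we are done; otherwise, using \emph{(ii)} for $i=i_1$ (which keeps the ``$R^{i_1}\cup A_{i_1}$--part'' $\N$--independent), one argues that the fresh $\N$--circuit is forced strictly lower, meeting some $R_{i_2}$ with $i_2<i_1$, and one repeats the exchange. Since the indices satisfy $i_1>i_2>\cdots\ge 1$, the procedure stops after at most $j$ steps, and it can stop only by exhibiting a rainbow set of size $t+1$ in $\M\cap\N$: by the time the index drops to $1$ we are back to $R^{1}=R$, and $R\cup A_1\in\N$ (by \emph{(ii)}) forbids any further $\N$--circuit. This contradicts the maximality of $R$.

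The step I expect to be the real obstacle is making the chain of exchanges compose correctly on both matroids at once. On the $\M$--side, after several swaps one is exchanging into a modified basis $B$ of $\spn_\M(R)$, not into $R$, and $C_\M(R,e)$ need not equal $C_\M(B,e)$; the way around this is the elementary observation that $C_\M(I,y)=C_\M(J,y)$ whenever $J\subseteq I\in\M$ and $y\in\spn_\M(J)$ (uniqueness of the fundamental circuit, Fact~\ref{fact1}), so that by choosing at each stage the circuit element and its replacement to leave earlier supports undisturbed, every use of Fact~\ref{fact2} remains valid and the final set is a genuine basis of $\spn_\M(R)$ to which $x$ adjoins; alternatively one executes all the $\M$--exchanges together at the end, as one multiple exchange along the pairs $(r_\ell,e_\ell)$. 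On the $\N$--side, one must check at each stage that the new $\N$--circuit really does descend a level --- here Fact~\ref{fact3} can be used to combine the current circuit with the fundamental $\N$--circuit created by $e_\ell$, and the ample sizes $|A_i|\ge i\delta$ from \eqref{eq3:3} give room to re-select $e_\ell$ should the obstruction resist moving --- and this is exactly where the nesting $R^{j+1}\subseteq\cdots\subseteq R^{1}=R$ together with \emph{(ii)} does its work. Carrying both bookkeepings simultaneously, with the strictly decreasing level as the termination measure, is the heart of the argument.
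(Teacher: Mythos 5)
Your overall strategy is the paper's: assume some $x\in A_{j+1}$ has $R+x\in\M$ and build an alternating chain $x=e_0,r_1,e_1,r_2,\dots$, deleting each $r_\ell$ to kill an $\N$-circuit and inserting each $e_\ell$ to restore the $\M$-span, with strictly decreasing level indices as the termination measure. Your preliminary facts are also the right ones: (i) is the induction hypothesis, (ii) is the defining property \eqref{eq:p2} (and gives the level drop, as in Claim~\ref{claim:2}), and (iii) is Claim~\ref{claim:3}. The architecture is sound.

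The genuine gap sits exactly where you say you ``expect the real obstacle'' to be: making the exchanges compose. After the first swap you are exchanging into a perturbed set, and for Fact~\ref{fact2} to keep applying you need (a) $r_{\ell+1}$ to lie in the fundamental $\N$-circuit of $e_\ell$ \emph{in the current set}, not merely in $C_\N(R,e_\ell)$, and (b) $C_\M(R,e_{\ell+1})$ to avoid all previously deleted $r_1,\dots,r_\ell$. Your proposed remedies do not deliver this: the observation $C_\M(I,y)=C_\M(J,y)$ for $J\subseteq I$ does not apply because the current set is not a subset of $R$ (elements have been swapped in as well as out); ``re-selecting $e_\ell$ using $|A_i|\ge i\delta$'' is not an argument (and \eqref{eq3:3} plays no such role); and a ``multiple exchange at the end'' is not automatic in a matroid without a triangularity condition on the supports. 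Worse, your actual choices preclude the needed disjointness: you take $e_\ell$ from $A_{i_\ell}$, the same level as $r_\ell$, and $r_1$ from an \emph{arbitrary} $R_{i_1}$ meeting the circuit, so nothing prevents, say, $r_1\in C_\N(R,e_1)$ or $r_1\in C_\M(R,e_2)$, after which the next exchange can fail. The paper's fix consists of two extra minimality requirements: $r_{\ell+1}$ is taken from the \emph{minimal}-index $R_p$ meeting $C_\N(R,e_\ell)$, and $e_{\ell+1}$ from the \emph{minimal}-index $A_l$ containing an element whose $\M$-support contains $r_{\ell+1}$. These force the monotonicity relations \eqref{eq2:1}--\eqref{eq2:2}, which yield the support-disjointness statements of Claims~\ref{claim:4} and~\ref{claim:5}, and the remaining composition on the $\N$-side is then handled by the circuit-elimination argument of Lemma~\ref{lem3}. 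Without these minimal choices and without a substitute for Lemma~\ref{lem3}, you cannot show the chain extends, so the contradiction is not reached.
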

Before proving Lemma~\ref{lem2} let us indicate how it is used to complete the inductive construction of $R_{j+1}$. We use the following observation:

\begin{obs}\label{obs:0}
Let $I$ be an independent set of size $t$ in a matroid $\M$ and suppose $J\subseteq\spn(I)$ has size $n>t$. If $K\subset J$ satisfies $J\setminus K\in \M$, then $|K|\ge n-t$.
\end{obs}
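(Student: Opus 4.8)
The plan is to use the elementary fact that an independent set contained in $\spn(I)$ can be no larger than $I$ itself. First I would record what the hypotheses give directly: $J\setminus K\in\M$, and $J\setminus K\subseteq J\subseteq\spn(I)$. The heart of the argument is then the claim that $|J\setminus K|\le t$. Suppose not, so that $|J\setminus K|>|I|=t$. Applying the augmentation property to the independent sets $I$ and $J\setminus K$ produces an element $x\in(J\setminus K)\setminus I$ with $I+x\in\M$. But $x\in J\subseteq\spn(I)$ and $x\notin I$ mean, by the definition of $\spn$, that there is an independent set $I'\subseteq I$ with $I'+x\notin\M$; since $\M$ is hereditary and $I'+x\subseteq I+x$, this forces $I+x\notin\M$, contradicting $I+x\in\M$. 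Hence $|J\setminus K|\le t$.

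To finish, since $K\subseteq J$ and $|J|=n$, we have $|K|=|J|-|J\setminus K|=n-|J\setminus K|\ge n-t$, which is the assertion. (Equivalently, one could phrase the middle step in terms of rank: because $I$ is independent, $r_\M(\spn(I))=r_\M(I)=t$, so every independent subset of $\spn(I)$—in particular $J\setminus K$—has size at most $t$.) I do not expect any real obstacle here; the only point that needs a moment's care is that it is the augmentation axiom that guarantees the exchanged element $x$ falls \emph{outside} $I$, and it is precisely this that makes "$x\in\spn(I)$" incompatible with "$I+x\in\M$".
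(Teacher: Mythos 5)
Your proof is correct. The paper states Observation~\ref{obs:0} without proof, treating it as immediate from the fact that an independent set contained in $\spn(I)$ has size at most $|I|=t$; your argument supplies exactly this standard justification (via the augmentation axiom and the definition of $\spn$) and then the counting step $|K|=n-|J\setminus K|\ge n-t$, so it matches the intended reasoning.
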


Assuming Lemma~\ref{lem2}, we have (*) $A_{j+1}\subseteq\spn_\M(R)$. We also have $|R^{j+1}\cup A_{j+1}|=n=|R|+\delta$. Hence $|R^{j+1}|\ge\delta$ (If $|R^{j+1}|<\delta$ then $|A_{j+1}|>|R|$, contradicting (*)). Let $R_{j+1}\subset R^{j+1}$ be of minimal size such that $R^{j+1}\setminus R_{j+1}\cup A_{j+1}\in\M$. By Observation~\ref{obs:0} we have $|R_{j+1}|\geq\delta$, as required.

The proof of Lemma~\ref{lem2} is done by an alternating path argument.
\begin{defn}\label{def1}
A \emph{colorful alternating path} (CAP) of length $k$, relative to $R$, consists of
\begin{enumerate}
  \item [(i)] A set $\{b_0,b_1,\ldots,b_k\}$ of distinct elements of the ground set $S$, where each $b_i$ belongs to some $A_j\in\A$ and distinct $b_i$'s belong to distinct $A_j$'s.
  \item [(ii)] A set of distinct elements $\{r_1,\ldots,r_k\}\subseteq R$ such that
  \begin{enumerate}
    \item [(P$_\M$)] $R-r_1+b_1-r_2+b_2-\cdots-r_k+b_k\in\M$ and $\spn_\M(R-r_1+b_1-r_2+b_2-\cdots-r_k+b_k)=\spn_\M(R)$.
    \item [(P$_\N$)] $R+b_0-r_1+b_1-r_2-\cdots+b_{k-1}-r_k\in\N$ and $\spn_\N(R+b_0-r_1+b_1-r_2+\cdots+b_{k-1}-r_k)=\spn_\N(R)$.
  \end{enumerate}
\end{enumerate}

If, in addition, $R+b_0-r_1+b_1-r_2+b_2-\cdots-r_k+b_k\in\M\cap \N$ then the CAP is called \emph{augmenting}.
\end{defn}

Since the $b_i$'s belong to distinct $F_{t+j}$'s we have:

\begin{obs}\label{obs:1}
If $R$ is of maximal size then no augmenting CAP relative to $R$ exists.
\end{obs}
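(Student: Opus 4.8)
The plan is to show that an augmenting CAP would yield a partial rainbow independent matching strictly larger than $R$, contradicting maximality. Suppose we have an augmenting CAP with vertices $b_0, b_1, \dots, b_k$ and $r_1, \dots, r_k$ as in Definition \ref{def1}. Set
\[
R' = R + b_0 - r_1 + b_1 - r_2 + \cdots - r_k + b_k.
\]
By the ``augmenting'' hypothesis, $R' \in \M \cap \N$. First I would compute $|R'|$: we have removed the $k$ distinct elements $r_1, \dots, r_k$ from $R$ and added the $k+1$ distinct elements $b_0, \dots, b_k$, and by part (i) of the definition these $b_i$'s are genuinely new (they lie in the sets $A_j \subseteq F_{t+j}$, all of which are disjoint from $R$, since $R \subseteq F_1 \cup \cdots \cup F_t$ and the $F_i$ are pairwise disjoint). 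Hence $|R'| = t - k + (k+1) = t + 1 = |R| + 1$.

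Next I would check that $R'$ is in fact a \emph{partial rainbow set} for $\F$, i.e.\ that it selects at most one element from each $F_i$. The elements of $R' \cap R$ are a subset of the original rainbow choices and so occupy distinct sets among $F_1, \dots, F_t$; in particular the removal of $r_1, \dots, r_k$ can only free up sets. The new elements $b_0, \dots, b_k$ lie in \emph{distinct} sets $A_{j_0}, \dots, A_{j_k}$ by part (i), hence in distinct sets $F_{t+j_0}, \dots, F_{t+j_k}$, all of which have index $> t$ and therefore do not meet $R$ at all. So no two elements of $R'$ share an index, and $R'$ is a partial rainbow set of size $t+1 = |R|+1$ lying in $\M \cap \N$. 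This contradicts the choice of $R$ as a partial rainbow independent matching of maximal size, so no augmenting CAP can exist.

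The only subtlety — and the step I would be most careful about — is confirming that the $b_i$ really are pairwise distinct from each other and from the surviving elements of $R$, so that the cardinality count $|R'| = |R| + 1$ is exact rather than merely an upper bound; but this is exactly what clause (i) of Definition \ref{def1} guarantees (the $b_i$ are distinct elements of $S$, each in some $A_j$, with distinct $b_i$ in distinct $A_j$), together with the disjointness of the family $\F$ which forces each $A_j \subseteq F_{t+j}$ to avoid $R$. With those facts in hand the argument is immediate: the defining property of an augmenting CAP is precisely ``$R' \in \M \cap \N$,'' and the combinatorial bookkeeping shows $R'$ is a larger partial rainbow independent matching than $R$. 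Hence the observation follows from the maximality of $R$.
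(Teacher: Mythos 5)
Your proposal is correct and is exactly the argument the paper intends (the paper compresses it into the single remark that the $b_i$'s lie in distinct $F_{t+j}$'s): an augmenting CAP produces a set in $\M\cap\N$ of size $|R|+1$ which is still a partial rainbow set, contradicting maximality. Your extra care about the distinctness of the $b_i$'s and the disjointness of the $F_i$'s from $R$ is precisely the bookkeeping the paper leaves implicit.
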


In order to extend our alternating path we shall need the following lemma:
\begin{lem}\label{lem3}
Let $\M$ be a matroid. Let $I\in\M$ and $X=\set{x_1,\ldots,x_k}\subseteq I$ and $Y=\set{y_1,\ldots,y_k}\subseteq \spn_\M(I)\setminus I$ be such that $\spn_\M\left(\left(I\setminus X\right) \cup Y\right)=\spn_\M(I)$.
Suppose $y_{k+1}\in \spn_\M(I)\setminus I$ and $x_{k+1}$ are such that $x_{k+1}\in C_\M(I,y_{k+1})\setminus X$ and $x_{k+1}\not\in C_\M(I,y_i)$ for all $i=1,\ldots,k$. Then $x_{k+1}\in C_\M(\left(I\setminus X\right) \cup Y,y_{k+1})$.
\end{lem}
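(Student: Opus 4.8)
The plan is to work entirely inside the single matroid $\M$ and keep track of how the support function behaves as we swap elements of $X$ out of $I$ and elements of $Y$ in. Write $I' = (I\setminus X)\cup Y$; by hypothesis $I'\in\M$ is a basis of $\spn_\M(I)$, so every $y\in\spn_\M(I)\setminus I'$ has a well-defined $\M$-support $C_\M(I',y)$. The goal is to show $x_{k+1}\in C_\M(I',y_{k+1})$, i.e., that the unique circuit $C$ contained in $I'+y_{k+1}$ passes through $x_{k+1}$. First I would record the ``if'' direction in spirit: since $x_{k+1}\in C_\M(I,y_{k+1})$, Fact~\ref{fact2} gives $I + y_{k+1} - x_{k+1}\in\M$ with the same span as $I$; the point is to transfer this from $I$ to $I'$.

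The natural route is via the circuit-elimination axiom, Fact~\ref{fact3}. For each $i=1,\dots,k$, the set $C_\M(I,y_i)+y_i$ is the fundamental circuit of $y_i$ with respect to $I$; similarly $C_\M(I,y_{k+1})+y_{k+1}$ is a circuit. I would try to argue by a careful induction on $k$ (the number of swaps performed), maintaining the invariant that after swapping in $y_1,\dots,y_m$ the fundamental circuit of $y_{k+1}$ with respect to the current independent set still contains $x_{k+1}$ and avoids $x_1,\dots,x_m$. The key local step: suppose $C$ is the fundamental circuit of $y_{k+1}$ with respect to an intermediate set $I_m$, with $x_{k+1}\in C$ and $x_{m+1}\notin C$ for the next element to be removed; we swap $x_{m+1}$ out and $y_{m+1}$ in. If $x_{m+1}\notin C$ then $C$ survives unchanged as a circuit in $I_{m+1}+y_{k+1}$ except possibly because $y_{m+1}$ has entered — but $y_{m+1}$ is not in $C$, so $C\subseteq I_{m+1}+y_{k+1}$ remains a circuit, hence equals the fundamental circuit of $y_{k+1}$ w.r.t.\ $I_{m+1}$, and still contains $x_{k+1}$. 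This looks almost too easy, which signals that the real content is in legitimizing the swaps: one must check that at each stage $I_m = (I\setminus\{x_1,\dots,x_m\})\cup\{y_1,\dots,y_m\}$ is actually independent with span equal to $\spn_\M(I)$. That is exactly guaranteed by the hypothesis $\spn_\M(I') = \spn_\M(I)$ together with repeated application of Fact~\ref{fact2}, but one has to order the swaps so that each $x_i$ removed lies in the support of the corresponding $y_i$ — and there is no a priori reason the given pairing $x_i\leftrightarrow y_i$ works.

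So the main obstacle I anticipate is the bookkeeping of the exchange: establishing that there is a valid order (or valid re-pairing) of $X$ with $Y$ realizing the passage $I\to I'$ through a chain of single Fact~\ref{fact2} moves, while never using the element $x_{k+1}$ and never destroying the property that $x_{k+1}$ lies in the fundamental circuit of $y_{k+1}$. I would handle this by not insisting on the given pairing: instead, use the symmetric exchange property for bases of $\spn_\M(I)$ to get \emph{some} bijection $\sigma\colon X\to Y$ and an order in which the swaps $x\mapsto \sigma(x)$ each preserve independence and span. Since $x_{k+1}\notin C_\M(I,y_i)$ for every $i\le k$, removing any $x_i$ while its fundamental circuit avoids $x_{k+1}$ is harmless; the hypothesis $x_{k+1}\in C_\M(I,y_{k+1})$ then persists all the way to $I'$, and tracking the fundamental circuit of $y_{k+1}$ as above yields $x_{k+1}\in C_\M(I',y_{k+1})$. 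An alternative, possibly cleaner, route is a direct argument: if $x_{k+1}\notin C_\M(I',y_{k+1})$ then $I'+y_{k+1}-x_{k+1}\in\M$, hence $\spn_\M(I'+y_{k+1}-x_{k+1})=\spn_\M(I')=\spn_\M(I)$; but removing $x_{k+1}$ from $I$ drops a circuit-closure relation, and one derives that $x_{k+1}\in\spn_\M((I\setminus X - x_{k+1})\cup Y\cup\{y_{k+1}\})$, which one then contradicts using $x_{k+1}\notin C_\M(I,y_i)$ for $i\le k$ via Fact~\ref{fact1} (uniqueness of supports) and Fact~\ref{fact3}. I would pursue whichever of these two contradiction-based or induction-based arguments compiles most smoothly; the circuit-elimination bookkeeping is where the care is needed, but it is routine once the order of exchanges is fixed.
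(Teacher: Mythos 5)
Your primary (induction-on-swaps) route has a genuine gap at exactly the point you flag as ``almost too easy''. The local step is carried out only under the assumption that the element $x_{m+1}$ about to be removed does \emph{not} lie in the current fundamental circuit $C=C_\M(I_m,y_{k+1})$ of $y_{k+1}$; nothing in the hypotheses rules out $x_{m+1}\in C$, and that is precisely the hard case. When $x_{m+1}\in C$, the circuit $C+y_{k+1}$ is destroyed by the swap, the fundamental circuit of $y_{k+1}$ with respect to $I_{m+1}$ is a genuinely different circuit obtained by eliminating $x_{m+1}$ against the fundamental circuit of $y_{m+1}$, and Fact~\ref{fact3} gives you no control over whether $x_{k+1}$ survives into it (strong elimination lets you prescribe one element to keep and one to drop, not two to keep). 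The hypothesis you would want at that moment, $x_{k+1}\notin C_\M(I_m,y_{m+1})$, is given in the lemma only for the \emph{original} set $I$; showing that it persists to the intermediate sets $I_m$ is essentially another instance of the lemma being proved, so the induction as set up is circular. Relatedly, you misdiagnose the main obstacle: it is not the ordering or re-pairing of the exchange (which is standard basis-exchange bookkeeping), but the tracking of $x_{k+1}$ through the changing fundamental circuits.

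Your ``alternative, possibly cleaner'' route begins with a false implication: for $J$ independent and $y\in\spn_\M(J)\setminus J$, an element $x\in J$ satisfies $x\in C_\M(J,y)$ \emph{if and only if} $J+y-x\in\M$. Hence $x_{k+1}\notin C_\M(I',y_{k+1})$ forces $I'+y_{k+1}-x_{k+1}$ to be \emph{dependent} (the unique circuit in $I'+y_{k+1}$ survives the deletion of $x_{k+1}$), not independent as you assert, and the rest of that sketch is too vague to repair this. For comparison, the paper's proof is a single global circuit-elimination argument: assuming $x_{k+1}\notin C_\M(I',y_{k+1})$, it applies Fact~\ref{fact3} to the two circuits $C_\M(I,y_{k+1})+y_{k+1}$ and $C_\M(I',y_{k+1})+y_{k+1}$ at $e=y_{k+1}$, $f=x_{k+1}$ to obtain a circuit $C\ni x_{k+1}$ avoiding $y_{k+1}$, chosen with $|C\cap Y|$ minimal; since $I\in\M$, $C$ must meet $Y$ in some $y_j$, and the hypothesis $x_{k+1}\notin C_\M(I,y_j)$ allows eliminating $y_j$ against $C_\M(I,y_j)+y_j$ while keeping $x_{k+1}$ and adding only elements of $I$, strictly decreasing $|C\cap Y|$ --- a contradiction. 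Your proposal does not contain this (or any other complete) argument.
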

\begin{proof}[Proof of Lemma~\ref{lem3}]
Suppose, for contradiction, that $x_{k+1}\not\in C_\M(\left(I\setminus X\right) \cup Y,y_{k+1})$.
Let $C'= C_\M(I,y_{k+1})+y_{k+1}$ and $C''= C_\M(\left(I\setminus X\right) \cup Y,y_{k+1})+y_{k+1}$. Then, by Fact~\ref{fact3}, there exits a circuit $C\subseteq C'\cup C''$, such that $x_{k+1}\in C$ and $y_{k+1}\not\in C$. Choose such a circuit $C$ with $|C\cap Y|$ minimal.
Since $I$ is independent $C$ must contain at least one element $y_j\in Y\cap C''$. Using Fact~\ref{fact3} again, since $x_{k+1}\not\in C_\M(I,y_j)$, there exists a circuit $\tilde{C}\subseteq C\cup (C_\M(I,y_j)+y_j)$ such that $x_{k+1}\in \tilde{C}$ and $y_{j}\not\in \tilde{C}$. We have $|\tilde{C}\cap Y|<|C\cap Y|$, contradicting the minimality property of $C$.
\end{proof}

\begin{proof}[Proof of Lemma~\ref{lem2}]
We shall show how the existence of some $i$, $1\leq i \leq\delta$, such that $A_i\not\subseteq\spn_\M(R)$ yields an augmenting CAP relative to $R$. This will contradict the maximality of $R$, by Observation~\ref{obs:1}.

Let $\{A_i\}$, $\{R_i\}$ and $\{R^i\}$ be defined as above. Recall that for all $i=1,\ldots,\delta$,
\begin{equation}\label{eq:p1}
   R^i=R\setminus\cup_{j=1}^{i-1}R_j,
\end{equation}
\begin{equation}\label{eq:p2}
    A_i\subseteq F_{t+i}\Text{ satisfies }R^{i}\cup A_{i}\in\N\Text{ and }|R^{i}\cup A_{i}|=n
\end{equation}
and
\begin{equation}\label{eq:p3}
    R_i\subseteq R^i\Text{ is of minimal size such that }R^i\setminus R_i\cup A_i\in \M.
\end{equation}

Assume, for contradiction, that $m$, $1\leq m\leq\delta$, is the minimal index such that $A_m\not\subseteq\spn_\M(R)$ and let $a\in A_m$ be such that $R+a\in\M$.
We shall construct a CAP, relative to $R$, starting from $a$. Let $b_0=a$. We have
\begin{equation}\label{eq1}
    R+b_0\in\M
\end{equation}
and, since no augmenting CAP relative to $R$ exists, we must have $b_0\in\spn_\N(R)$. Let $j$ be the maximal index such that $b_0\in\spn_\N(R^j)$. Since $b_0\in A_m$ and, by (\ref{eq:p2}), $R^m\cup A_m\in\N$, we obtain $b_0\not\in\spn_\N(R^m)$. Thus, $j<m$. Since $R_j=R^j\setminus R^{j+1}$, it follows from the maximality of $j$ that $C_\N(R^j,b_0)\cap R_j\neq\emptyset$.
By Fact~\ref{fact2}, there exists $r_1\in R_j$ such that $R+b_0-r_1\in\N$ and
\begin{equation}\label{eq1:5}
    \spn_\N(R+b_0-r_1)=\spn_\N(R).
\end{equation}

Since $j<m$, we have, by the minimality of $m$, that $A_j\subseteq \spn_\M(R)$. By the minimality of $R_j$ (see (\ref{eq:p3})) there exists $x\in A_j$ such that $r_1\in C_\M(R,x)$ (otherwise $A_j\cup R^{j+1}+r_1\in \M$). Let $l\leq j$ be minimal such that $A_l$ contains an element $b_1$ satisfying $r_1\in C_\M(R,b_1)$.
By Fact~\ref{fact2}, we have $R-r_1+b_1\in\M$ and $\spn_\M(R-r_1+b_1)=\spn_\M(R)$. This, combined with (\ref{eq1}), implies that $R+b_0-r_1+b_1\in\M$. Thus, a CAP of length 1 was created.

Now, suppose that we managed to construct a CAP of length $k$. We shall show that if the CAP is not augmenting, then it can be extended. Denote $R_\M(k)=R-r_1+b_1-r_2+b_2-\cdots-r_k+b_k$ and $R_\N(k)=R+b_0-r_1+b_1-r_2+\cdots+b_{k-1}-r_k$. Note that
\begin{equation}\label{eq3}
    R_\M(k)+b_0= R_\N(k)+b_k.
\end{equation}

\begin{claim}\label{claim:1}
$b_k\in\spn_\N(R)$.
\end{claim}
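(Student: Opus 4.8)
We want to show $b_k\in\spn_\N(R)$. The key identity is \eqref{eq3}: $R_\M(k)+b_0=R_\N(k)+b_k$. Since the CAP is not augmenting (otherwise we are done by Observation~\ref{obs:1}), we know $R_\M(k)+b_0\notin\M\cap\N$. But by property (P$_\M$), $R_\M(k)\in\M$ and $\spn_\M(R_\M(k))=\spn_\M(R)$; since $b_0=a$ was chosen so that $R+b_0\in\M$, we have $b_0\notin\spn_\M(R)=\spn_\M(R_\M(k))$, hence $R_\M(k)+b_0\in\M$. Therefore it must be that $R_\M(k)+b_0\notin\N$, i.e. $R_\N(k)+b_k\notin\N$. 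By (P$_\N$), $R_\N(k)\in\N$ with $\spn_\N(R_\N(k))=\spn_\N(R)$, so $R_\N(k)+b_k\notin\N$ forces $b_k\in\spn_\N(R_\N(k))=\spn_\N(R)$. That is the claim.

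So the proof is short: the only real content is assembling the four facts (the CAP is not augmenting; $b_0\notin\spn_\M(R)$; $R_\M(k)\in\M$ preserves $\M$-span; $R_\N(k)\in\N$ preserves $\N$-span) and combining them through the cardinality/membership identity \eqref{eq3}. I would write it as: ``Suppose not. Then $b_k\notin\spn_\N(R)=\spn_\N(R_\N(k))$, so $R_\N(k)+b_k\in\N$. Also $b_0\notin\spn_\M(R)=\spn_\M(R_\M(k))$, so $R_\M(k)+b_0\in\M$. By \eqref{eq3} these two sets are equal, so $R_\M(k)+b_0\in\M\cap\N$, meaning the CAP is augmenting --- contradicting Observation~\ref{obs:1}.''

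**Anticipated obstacle.** There is essentially no obstacle in proving the claim itself; the subtlety was already absorbed into the \emph{definition} of a CAP, which builds in both the independence conditions (P$_\M$), (P$_\N$) and --- crucially --- the span-preservation clauses $\spn_\M(R_\M(k))=\spn_\M(R)$ and $\spn_\N(R_\N(k))=\spn_\N(R)$. Without those span clauses one could not conclude $b_0\notin\spn_\M(R_\M(k))$ or deduce $b_k\in\spn_\N(R)$ from $b_k\in\spn_\N(R_\N(k))$. The harder work lies ahead of this claim, not in it: namely, using $b_k\in\spn_\N(R)$ to actually extend the CAP by one step (finding $r_{k+1}$ and then $b_{k+1}$ while maintaining all four clauses and disjointness of the $b_i$'s and $r_i$'s), which is where Lemma~\ref{lem3} and Fact~\ref{fact2} will be needed, and where the real case analysis --- mirroring the length-$1$ construction --- takes place.
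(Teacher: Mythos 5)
Your proof is correct and is essentially identical to the paper's: both use \eqref{eq1} together with the span-preservation clause of (P$_\M$) to get $R_\M(k)+b_0\in\M$, then note that if $b_k\notin\spn_\N(R)$ the clause (P$_\N$) would give $R_\N(k)+b_k\in\N$, so by \eqref{eq3} the CAP would be augmenting, contradicting Observation~\ref{obs:1}. Your remark that the span-preservation conditions built into the CAP definition are what make this work is exactly the right observation.
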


\begin{proof}[Proof of Claim~\ref{claim:1}]
\renewcommand{\qedsymbol}{}
By (P$_\M$), we have $\spn_\M(R_\M(k))=\spn_\M(R)$. Hence, from (\ref{eq1}) we have $R_\M(k)+b_0\in\M$. Also, by (P$_\N$), we have $\spn_\N(R_\N(k))=\spn_\N(R)$. Assume, for contradiction, that $R+b_k\in\N$. Then, $R_\N(k)+b_k\in\N$, and by (\ref{eq3}) we obtain an augmenting CAP, contradicting the maximality property of $R$.
\end{proof}

Assuming Claim 1, let $p$ be the maximal index such that $b_k\in \spn_\N(R^p)$. By (\ref{eq:p1}), $p$ is the minimal index such that $C_\N(R,b_k)\cap R_p\ne \emptyset$.
Let $r_{k+1}\in C_\N(R,b_k)\cap R_p$. By Fact~\ref{fact2}, $R+b_k-r_{k+1}\in\N$ and $\spn_\N(R+b_k-r_{k+1})=\spn_\N(R)$.

\begin{claim}\label{claim:2}
Let $q$ be the index such that $b_k\in A_q$. Then, $p<q$.
\end{claim}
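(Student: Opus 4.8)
The plan is to mirror, on the $\N$-side, the argument that produced the inequality $j < m$ at the start of the CAP construction, and then to combine it with the structural facts we already know about the indices $l$ at which the $b_i$'s were chosen. Recall that $b_k \in A_q$ and that, by construction, $b_k$ was selected (in the previous extension step, or in the base case if $k=1$) as an element of some $A_l$ with $l$ minimal subject to $r_k \in C_\M(R, b_k)$; in particular the whole sequence of indices $q_i$ with $b_i \in A_{q_i}$ is something we control. The key input is equation~\eqref{eq:p2}: since $b_k \in A_q$ and $R^q \cup A_q \in \N$, the element $b_k$ is \emph{not} spanned by $R^q$ in $\N$, i.e. $b_k \notin \spn_\N(R^q)$. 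On the other hand, $p$ was defined as the maximal index with $b_k \in \spn_\N(R^p)$, so $b_k \in \spn_\N(R^p)$. Since the $R^i$ form a decreasing chain $R = R^1 \supseteq R^2 \supseteq \cdots$, the set of indices $i$ for which $b_k \in \spn_\N(R^i)$ is a down-set (an initial segment), so $b_k \in \spn_\N(R^p)$ together with $b_k \notin \spn_\N(R^q)$ forces $p < q$.

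The one gap to fill is the justification that $b_k \notin \spn_\N(R^q)$, which I would argue exactly as in the treatment of $b_0$: by \eqref{eq:p2} we have $R^q \cup A_q \in \N$ and $b_k \in A_q \subseteq F_{t+q}$, so $R^q + b_k \subseteq R^q \cup A_q \in \N$, whence $R^q + b_k \in \N$ and therefore $b_k \notin \spn_\N(R^q)$. I should also note that $p$ and $q$ are both well-defined: $q$ exists because every $b_i$ lies in some $A_j$ by Definition~\ref{def1}(i), and $p$ exists because Claim~\ref{claim:1} gives $b_k \in \spn_\N(R) = \spn_\N(R^1)$, so the index set over which we maximize to define $p$ is nonempty.

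I expect the only subtlety — and the reason the claim is stated separately rather than absorbed into the extension step — to be the monotonicity remark: one must be sure that "$b_k$ is spanned by $R^i$ in $\N$" really does define an initial segment of indices. This is immediate from $R^1 \supseteq R^2 \supseteq \cdots$ and the monotonicity of $\spn_\N$ (if $R^{i+1} \subseteq R^i$ then $\spn_\N(R^{i+1}) \subseteq \spn_\N(R^i)$), but it is worth spelling out, since the entire role of Claim~\ref{claim:2} in the surrounding argument is to guarantee that $p < q \le j \le m \le \delta$, which is what will let us invoke the minimality of $m$ (giving $A_p \subseteq \spn_\M(R)$) when we continue the CAP and choose $b_{k+1}$ — the same pattern used to pass from $r_1$ to $b_1$ via Lemma~\ref{lem3}. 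No genuinely hard step is involved; it is a bookkeeping claim whose proof is the $\N$-analogue of the $j<m$ step already carried out.
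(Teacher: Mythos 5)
Your argument is exactly the paper's: from \eqref{eq:p2} one gets $R^q\cup A_q\in\N$, hence $b_k\notin\spn_\N(R^q)$, and since $b_k\in\spn_\N(R^p)$ and the $R^i$ form a decreasing chain, $p<q$. The proposal is correct and takes essentially the same (one-line) route, just with the monotonicity and well-definedness bookkeeping spelled out more explicitly.
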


\begin{proof}[Proof of Claim~\ref{claim:2}]
\renewcommand{\qedsymbol}{}
By (\ref{eq:p2}), $R^q\cup A_q\in \N$ and thus, $b_k\not\in \spn_\N(R^q)$. Since $b_k\in \spn_\N(R^p)$ we conclude that $R^q\subsetneq R^p$, which implies that $p<q$.
\end{proof}

\begin{claim}\label{claim:3}
There exists $x\in A_p$ such that $r_{k+1}\in C_\M(R,x)$.
\end{claim}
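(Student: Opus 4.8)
The plan is to prove Claim~\ref{claim:3} by contradiction, driving everything from the minimality of $R_p$ in \eqref{eq:p3}, in exactly the spirit of the base step of the construction (the parenthetical ``otherwise $A_j\cup R^{j+1}+r_1\in\M$'' used to produce $b_1$). So the target is: if no $x\in A_p$ satisfied $r_{k+1}\in C_\M(R,x)$, then $(R^{p+1}\cup A_p)+r_{k+1}$ would be independent, which is impossible because $R^{p+1}\cup A_p\in\M$ is obtained from $R^p$ by deleting the minimal set $R_p$.

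Before that, I would make sure $C_\M(R,x)$ is even meaningful for $x\in A_p$, i.e.\ that $A_p\subseteq\spn_\M(R)$. By Claim~\ref{claim:2}, $p<q$, where $b_k\in A_q$; and the CAP has been built so that the indices of the sets hosting $b_1,b_2,\dots$ strictly decrease and all stay below $m$ (recall $b_1$ came from some $A_l$ with $l\le j<m$, and each later $b_{i+1}$ from some $A_{l'}$ with $l'\le p_i<(\text{index of }b_i)$). Hence $p<q<m$, and the minimality of $m$ gives $A_p\subseteq\spn_\M(R)$, so $C_\M(R,x)$ is defined for every $x\in A_p$.

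I would then argue as follows. Suppose, for contradiction, that $r_{k+1}\notin C_\M(R,x)$ for every $x\in A_p$. Since $C_\M(R,x)\subseteq R$, this means $C_\M(R,x)\subseteq R-r_{k+1}$, hence $x\in\spn_\M(R-r_{k+1})$, for all $x\in A_p$; so $A_p\subseteq\spn_\M(R-r_{k+1})$. Because $r_{k+1}\in R_p=R^p\setminus R^{p+1}$ we also have $R^{p+1}\subseteq R-r_{k+1}$, and therefore $R^{p+1}\cup A_p\subseteq\spn_\M(R-r_{k+1})$, so $\spn_\M(R^{p+1}\cup A_p)\subseteq\spn_\M(R-r_{k+1})$. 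As $R$ is independent, $r_{k+1}\notin\spn_\M(R-r_{k+1})$, hence $r_{k+1}\notin\spn_\M(R^{p+1}\cup A_p)$; combined with $R^{p+1}\cup A_p\in\M$ (from \eqref{eq:p3}) this gives $(R^{p+1}\cup A_p)+r_{k+1}\in\M$. But then $R_p-r_{k+1}$ is a strictly smaller subset of $R^p$ with $\bigl(R^p\setminus(R_p-r_{k+1})\bigr)\cup A_p=(R^{p+1}\cup A_p)+r_{k+1}\in\M$, contradicting the minimality of $R_p$ in \eqref{eq:p3}. Thus some $x\in A_p$ has $r_{k+1}\in C_\M(R,x)$, as claimed.

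The only genuinely delicate point I expect is the bookkeeping in the second paragraph: one must be certain that every set $A_i$ used in the CAP construction has index $i<m$, so that $A_p\subseteq\spn_\M(R)$ and $C_\M(R,x)$ exists; without this the span computation makes no sense. Once that is settled the rest is the short closure argument above. If one preferred to sidestep the claim ``$q<m$'', the same conclusion can be obtained instead by choosing a circuit $C\ni r_{k+1}$ inside $R^{p+1}\cup A_p+r_{k+1}$ (dependent by the minimality of $R_p$) with $|C\cap A_p|$ minimum and applying Fact~\ref{fact3} repeatedly to force $|C\cap A_p|=1$, so that $C=C_\M(R,x)+x$ for the unique $x\in C\cap A_p$ and hence $r_{k+1}\in C_\M(R,x)$.
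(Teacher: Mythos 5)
Your proof is correct and is essentially the paper's argument: the paper also negates the claim to conclude $A_p\cup R^{p+1}+r_{k+1}\in\M$ and derives a contradiction with the minimality of $R_p$ in \eqref{eq:p3}. You simply spell out the span computation and the prerequisite $A_p\subseteq\spn_\M(R)$ (via $p<q\le m$ and the minimality of $m$) that the paper leaves implicit.
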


\begin{proof}[Proof of Claim~\ref{claim:3}]
\renewcommand{\qedsymbol}{}
Assume the opposite. Then $A_p\cup R^{p+1}+r_{k+1}\in \M$. This contradicts the minimality property of $R_p$ (see (\ref{eq:p3})).
\end{proof}

Let $l$ be minimal such that $A_l$ contains an element $b_{k+1}$ satisfying $r_{k+1}\in C_\M(R,b_{k+1})$. By Claim 3, $l\le p$. This, together with Claim 2, yields

\begin{equation}\label{eq2:1}
  \text{if $b_i\in A_u$ and $b_j\in A_v$ with $i<j$, then $v<u$,}
\end{equation}
and
\begin{equation}\label{eq2:2}
  \text{if $r_i\in R_u$ and $r_j\in R_v$ with $i<j$, then $v<u$.}
\end{equation}

\begin{claim}\label{claim:4}
$r_{k+1}\not\in C_\N(R,b_i)$  for all  $i=0,\ldots,k-1$.
\end{claim}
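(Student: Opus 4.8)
The plan is to prove Claim~\ref{claim:4} by contradiction, using the ``localization'' of fundamental $\N$-circuits: if an element is $\N$-spanned by a sub-collection $R^{j}$ of $R$, then its entire fundamental circuit relative to $R$ already lies inside $R^{j}$. First I would make explicit, for each $i=0,\ldots,k$, the index that governed the choice of $r_{i+1}$. Let $p_i$ be the maximal index with $b_i\in\spn_\N(R^{p_i})$; this exists because $b_i\in\spn_\N(R)=\spn_\N(R^1)$ (for $0\le i\le k-1$ this was established when Claim~\ref{claim:1}, or the no-augmenting-CAP property for $b_0$, was invoked at an earlier step, and for $i=k$ it is Claim~\ref{claim:1} just proved). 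The construction picked $r_{i+1}$ inside $C_\N(R,b_i)\cap R_{p_i}$: for $i=0$ this is the element $r_1\in C_\N(R^{p_0},b_0)\cap R_{p_0}$ produced at the start of the construction (and $C_\N(R^{p_0},b_0)=C_\N(R,b_0)$ by localization), and for $i=k$ it is the element $r_{k+1}\in C_\N(R,b_k)\cap R_p$ constructed just now, so that $p=p_k$.

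The localization statement itself requires only a short standard argument. Since $R\in\M\cap\N$ and $R^{p_i}\subseteq R$, the set $R^{p_i}$ is independent in $\N$; since $b_i\notin R$ (the sets $F_j$ are pairwise disjoint, so $A_j\cap R=\emptyset$) and $b_i\in\spn_\N(R^{p_i})$, the set $R^{p_i}+b_i$ is dependent, hence contains an $\N$-circuit, and this circuit must coincide with the unique circuit contained in $R+b_i$ (unique because $R$ is independent). Therefore $C_\N(R,b_i)=C_\N(R^{p_i},b_i)\subseteq R^{p_i}$. I expect this uniqueness-of-the-fundamental-circuit fact to be the only non-elementary ingredient, and it is entirely routine.

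Given these observations the claim follows at once. Suppose, for contradiction, that $r_{k+1}\in C_\N(R,b_i)$ for some $i$ with $0\le i\le k-1$. Applying \eqref{eq2:2} to $r_{i+1}\in R_{p_i}$ and $r_{k+1}\in R_p$, with $i+1<k+1$, yields $p<p_i$. On the other hand, by localization $r_{k+1}\in C_\N(R,b_i)\subseteq R^{p_i}=R\setminus\bigcup_{j<p_i}R_j$; but $p\le p_i-1$, so $R_p$ is one of the removed blocks and $R_p\cap R^{p_i}=\emptyset$, contradicting $r_{k+1}\in R_p$. This proves Claim~\ref{claim:4}, and Claim~\ref{claim:4} is exactly the hypothesis ``$x_{k+1}\notin C_\M(I,y_i)$ for all $i$'' needed to apply Lemma~\ref{lem3} with $\M$ replaced by $\N$, $I=R$, $X=\set{r_1,\ldots,r_k}$, $Y=\set{b_0,\ldots,b_{k-1}}$, $y_{k+1}=b_k$ and $x_{k+1}=r_{k+1}$; the conclusion of that lemma, together with Fact~\ref{fact2}, then yields property (P$_\N$) for the length-$(k+1)$ path and completes the induction.

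The only real difficulty is bookkeeping: one must check that the index $p_i$ extracted above is literally the one used when $r_{i+1}$ was chosen (so that indeed $r_{i+1}\in R_{p_i}$), and that $C_\N(R,b_i)$ is defined for every $i\le k-1$, which amounts to the two facts used above, namely $b_i\in\spn_\N(R)$ (from Claim~\ref{claim:1} or, for $b_0$, the no-augmenting-CAP property) and $b_i\notin R$ (from disjointness of the $F_j$).
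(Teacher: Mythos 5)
Your proof is correct and follows essentially the same route as the paper: both arguments identify $r_{i+1}$ as having been drawn from $R_{p_i}$ where $p_i$ is the minimal index with $C_\N(R,b_i)\cap R_{p_i}\neq\emptyset$ (equivalently, the maximal index with $b_i\in\spn_\N(R^{p_i})$, an equivalence the paper also notes via \eqref{eq:p1}), and then invoke \eqref{eq2:2} to get $p<p_i$ and conclude $C_\N(R,b_i)\cap R_p=\emptyset$. Your explicit ``localization'' of the fundamental circuit to $R^{p_i}$ is just a spelled-out version of the minimality statement the paper uses directly.
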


\begin{proof}[Proof of Claim~\ref{claim:4}]
\renewcommand{\qedsymbol}{}
Let $j\in\{1,\ldots,k\}$. In the construction described above, the element $r_j$ was chosen from $R_u$, where $u$ is minimal such that $C_\N(R, b_{j-1})\cap R_u\ne \emptyset$. Recall that $r_{k+1}\in R_p$. Thus, by (\ref{eq2:2}), we have $p< u$, and hence $C_\N(R, b_{j-1})\cap R_p = \emptyset$, which implies the claim.
\end{proof}

By applying Lemma~\ref{lem3} to the sequences $\set{r_1,\ldots,r_k, r_{k+1}}$ and $\set{b_0,\ldots,b_{k-1}, b_k}$, it follows that $r_{k+1}\in C_\N(R_\N(k),b_k)$.
By Fact~\ref{fact2}, it follows that
\begin{equation}\label{eq5}
\begin{split}
  & R_\N(k)+b_k-r_{k+1}\in \N\Text{,  and } \\
  & \spn_\N(R_\N(k)+b_k-r_{k+1})=\spn_\N(R_\N(k))=\spn_\N(R).
\end{split}
\end{equation}

\begin{claim}\label{claim:5}
$r_{k+1}\in C_\M(R_\M(k),b_{k+1})$.
\end{claim}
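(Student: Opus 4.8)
The plan is to deduce Claim~\ref{claim:5} from a single application of Lemma~\ref{lem3} to the matroid $\M$, exactly parallel to the application of Lemma~\ref{lem3} to $\N$ (through Claim~\ref{claim:4}) that produced $r_{k+1}\in C_\N(R_\N(k),b_k)$ in \eqref{eq5}. I would take $I=R$, $X=\set{r_1,\ldots,r_k}$, $Y=\set{b_1,\ldots,b_k}$, $x_{k+1}=r_{k+1}$ and $y_{k+1}=b_{k+1}$. Since $(R\setminus X)\cup Y=R-r_1+b_1-\cdots-r_k+b_k=R_\M(k)$, the conclusion $x_{k+1}\in C_\M\bigl((I\setminus X)\cup Y,\,y_{k+1}\bigr)$ of the lemma is precisely $r_{k+1}\in C_\M(R_\M(k),b_{k+1})$, as wanted.

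Most of the hypotheses of Lemma~\ref{lem3} are immediate. We have $R\in\M$ and $X\subseteq R$. Writing $b_i\in A_{l_i}$: each $b_i\in F_{t+l_i}$, so $b_i\notin R$, and $b_i\in\spn_\M(R)$ because $r_i\in C_\M(R,b_i)$ already witnesses $R+b_i\notin\M$; hence $Y\subseteq\spn_\M(R)\setminus R$, and the same remark gives $y_{k+1}=b_{k+1}\in\spn_\M(R)\setminus R$. The identity $\spn_\M\bigl((R\setminus X)\cup Y\bigr)=\spn_\M(R_\M(k))=\spn_\M(R)$ is exactly property (P$_\M$) for the CAP of length $k$, and $x_{k+1}=r_{k+1}\in C_\M(R,b_{k+1})$ holds by the choice of $b_{k+1}$. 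Finally $r_{k+1}\in R_p$, whereas \eqref{eq2:2} places each $r_i$ with $i\le k$ in a block $R_{u_i}$ with $u_i>p$; as the blocks $R_j$ are pairwise disjoint this gives $r_{k+1}\notin X$.

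The one substantive hypothesis, and the point where I expect the real work, is $r_{k+1}\notin C_\M(R,b_i)$ for every $i=1,\ldots,k$ — the exact counterpart for $\M$ of Claim~\ref{claim:4}. By \eqref{eq2:1} the $A$-indices of $b_1,\ldots,b_k$ strictly decrease, so $l_i\ge l_k$ for $i\le k$, and $l_k>p$ by Claim~\ref{claim:2}; hence $R_p$ is contained in $R_1\cup\cdots\cup R_{l_i-1}$, i.e.\ in $R\setminus R^{l_i}$. It therefore suffices to show that the fundamental circuit $C_\M(R,b_i)$ lies entirely in $R^{l_i}$, equivalently that $b_i\in\spn_\M(R^{l_i})$, so that $r_{k+1}\in R_p$ cannot belong to it. The obstacle is that the minimality built into the choice of $b_i$ speaks about the element $r_i$ rather than $r_{k+1}$; I would bridge it with a circuit-elimination argument in the spirit of Lemma~\ref{lem3}: assuming $r_{k+1}\in C_\M(R,b_i)$, apply Fact~\ref{fact3} to the fundamental circuits $C_\M(R,b_i)+b_i$ and $C_\M(R,b_{k+1})+b_{k+1}$ — which share $r_{k+1}$, and for which $r_i$ lies in the first but not the second (since $l_{k+1}<l_i$ and $l_i$ is minimal with $A_{l_i}$ containing an element whose $\M$-support in $R$ meets $r_i$) — to extract a circuit exhibiting an element of some block $R_{l'}$ with $l'<l_i$ inside an $\M$-support forbidden by the minimality of $l_i$ or of $R_{l_i}$ (the latter via the generalization of Claim~\ref{claim:3} to an arbitrary element of $R_{l_i}$). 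Once $r_{k+1}\notin C_\M(R,b_i)$ is secured for all $i\le k$, Lemma~\ref{lem3} applies and yields Claim~\ref{claim:5}.
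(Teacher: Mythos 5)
There is a genuine gap. Your reduction to Lemma~\ref{lem3} is set up correctly, but the one hypothesis you yourself flag as ``the real work'' --- that $r_{k+1}\notin C_\M(R,b_i)$ for all $i\le k$ --- is not established, and the bridge you propose does not hold. Your intermediate claim is that $C_\M(R,b_i)\subseteq R^{l_i}$, i.e.\ $b_i\in\spn_\M(R^{l_i})$. But the construction only gives $A_{l_i}\cup R^{l_i+1}\in\M$, which yields the \emph{opposite} kind of information: $b_i\notin\spn_\M(R^{l_i+1})$, so $C_\M(R,b_i)$ must \emph{meet} $R_1\cup\cdots\cup R_{l_i}$; nothing confines it to $R^{l_i}$ or keeps it away from $R_p$. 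The minimality of $l_i$ constrains which elements of $A_u$ for $u<l_i$ can have $r_i$ in their support --- it says nothing about where inside $R$ the support of $b_i$ itself lies. The closing circuit-elimination sketch (``extract a circuit exhibiting an element of some block $R_{l'}$\,\ldots forbidden by the minimality of $l_i$ or of $R_{l_i}$'') is too vague to repair this; the symmetry with Claim~\ref{claim:4} that you are banking on genuinely breaks, because the $\N$-side minimality is over which block $R_u$ the support meets, while the $\M$-side minimality is over the $A$-index, and these control different things.

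The paper's proof sidesteps Lemma~\ref{lem3} entirely and is essentially one line: for each $i\le k$, the index $u_i$ with $b_i\in A_{u_i}$ was chosen \emph{minimal} such that $A_{u_i}$ contains an element whose $\M$-support in $R$ contains $r_i$; since $b_{k+1}\in A_l$ with $l<u_i$ by \eqref{eq2:1}, no element of $A_l$ --- in particular $b_{k+1}$ --- has $r_i$ in its support. Hence $C_\M(R,b_{k+1})\cap\set{r_1,\ldots,r_k}=\emptyset$, so $C_\M(R,b_{k+1})+b_{k+1}\subseteq R_\M(k)+b_{k+1}$ and, by uniqueness of the fundamental circuit, $C_\M(R_\M(k),b_{k+1})=C_\M(R,b_{k+1})$, which contains $r_{k+1}$ by the choice of $b_{k+1}$. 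Note the direction: the useful disjointness is of the support of the \emph{new} element $b_{k+1}$ from the old $r_i$'s, not of $r_{k+1}$ from the supports of the old $b_i$'s, which is what your Lemma~\ref{lem3} application would require.
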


\begin{proof}[Proof of Claim~\ref{claim:5}]
\renewcommand{\qedsymbol}{}
Let $i\in\{1,\ldots,k\}$. In the construction described above, the element $b_i$ was chosen from $R_u$, where $u$ is minimal such that $A_u$ contains an element $b_i$ such that $r_i\in C_\M(R,b_i)$. Recall that $b_{k+1}$ was chosen from $A_l$, and by (\ref{eq2:1}), $l<u$. Thus, $r_i\not\in C_\M(R,b_{k+1})$. Since this is true for any $i\in\{1,\ldots,k\}$, we have $C_\M(R,b_{k+1})\cap\set{r_1,\ldots,r_k}=\emptyset$, and hence, $C_\M(R_\M(k),b_{k+1})=C_\M(R,b_{k+1})$. Since $b_{k+1}$ was chosen so that $r_{k+1}\in C_\M(R,b_{k+1})$, the claim follows.
\end{proof}

Assuming Claim 5, by Fact~\ref{fact2}, we have
\begin{equation}\label{eq6}
\begin{split}
  & R_\M(k)+b_{k+1}-r_{k+1}\in\M\Text{,  and } \\
  & \spn_\M(R_\M(k)+b_{k+1}-r_{k+1})=\spn_\M(R_\M(k))=\spn_\M(R).
\end{split}
\end{equation}

By (P$_\M$), (P$_\N$), (\ref{eq5}) and (\ref{eq6}), the CAP was extended to the length of $k+1$.

By (\ref{eq2:1}) and (\ref{eq2:2}), the process must end, yielding an augmenting CAP. This completes the proof of Lemma~\ref{lem2} and hence of Theorem~\ref{main}.

\end{proof}
\section{Independent partial transversals in Matroidal Latin Squares}\label{sec3}
Let $\M$ be matroid of rank $n$ defined on a ground set $S$. A \emph{Matroidal Latin Square (MLS)} of degree $n$ over $\M$ was defined in \cite{KZ12} as an $n\times n$ matrix whose entries are from $S$, such that each row and column is a basis of $\M$. (After publication, the authors found out that a similar object had been introduced earlier by Chappell \cite{Chappell99}.)  Note that the notion of MLS generalizes the notion of Latin square, as a Latin square is an MLS over a partition matroid (that is, a direct sum of uniform matroids, each of rank 1). Analogously to Norton's definition for row Latin square in \cite{Norton52}, we define a \emph{row MLS}, as an $n\times n$ matrix whose entries are from $S$, such that each row is a basis of $\M$. Thus, every MLS is a row MLS.

An \emph{independent partial transversal} in an MLS, or in a row MLS, $A$, is an independent subset of entries of $A$ where no two of them lie in the same row or column of $A$. It was conjectured in \cite{KZ12} that every MLS of degree $n$ has an independent partial transversal of size $n-1$. It was shown there that, in general, we cannot expect to find a partial independent transversal of size $n$. The lower bound set in \cite{KZ12} for the size of a partial independent transversal in an MLS was $\lceil2n/3\rceil$. Theorem~\ref{main} yields a significant improvement for that bound:

\begin{cor}\label{cor1}
Every row MLS of degree $n$ has an independent partial transversal of size at least  $n-\sqrt{n}$.
\end{cor}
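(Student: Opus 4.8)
The plan is to realize a row MLS as an instance of the setting of Theorem~\ref{main}, with $\M$ playing the role of one matroid and a partition matroid playing the role of the other. Let $A=(a_{ij})$ be a row MLS of degree $n$ over the matroid $\M$, defined on ground set $S$, and let $\N$ be the column partition matroid: a set of entries of $A$ is independent in $\N$ if and only if it contains at most one entry from each column. For $i=1,\ldots,n$, let $F_i$ be the set of entries in row $i$ of $A$, viewed as a set of $n$ "labelled" elements (so that the same element of $S$ appearing in two different rows is treated as two distinct members of the ground set; formally one works with the ground set $\{1,\ldots,n\}\times\{1,\ldots,n\}$ of cell-positions, with $\M$ and $\N$ pulled back along the map $(i,j)\mapsto a_{ij}$ and $(i,j)\mapsto j$ respectively). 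Then the $F_i$ are pairwise disjoint and each has size $n$; since each row of $A$ is a basis of $\M$ and trivially contains one entry per column, each $F_i\in\M\cap\N$.

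Next I would observe that a partial rainbow set for $\F=(F_1,\ldots,F_n)$ that lies in $\M\cap\N$ is exactly an independent partial transversal of $A$: a rainbow set picks at most one entry from each row, membership in $\N$ forces at most one entry from each column, and membership in $\M$ is precisely independence in the underlying matroid. Applying Theorem~\ref{main} to $\M$, $\N$ and $\F$ yields a partial rainbow set of size at least $n-\sqrt{n}$ belonging to $\M\cap\N$, which is the desired independent partial transversal.

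The only point requiring care — and the step I expect to be the "main obstacle," though it is really a routine formalization rather than a genuine difficulty — is the bookkeeping that makes the sets $F_i$ genuinely disjoint and keeps the two matroid structures on the common ground set $\{1,\ldots,n\}^2$. A matroid $\M$ on $S$ induces a matroid on $\{1,\ldots,n\}^2$ via the labelling map $(i,j)\mapsto a_{ij}$ only because, within a single row, the map is injective (the row is a basis, hence its entries are distinct); across rows the map need not be injective, but that is harmless, since the pulled-back family of independent sets is still a matroid (the image of any independent transversal is an independent \emph{set} in $S$ precisely because a transversal meets each row at most once and... in fact one must check that two equal entries $a_{ij}=a_{i'j'}$ from different rows are not both picked — but $\N$ already forbids $j=j'$, and for $j\ne j'$ the two positions carry the same element of $S$, so the image is a multiset; here one uses that $\M$ restricted to any such multiset image is still matroidal because repetition of a non-loop element makes the pair dependent, consistent with the pullback). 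Once this translation is set up, Theorem~\ref{main} applies verbatim and the corollary follows; note that it holds for row MLS, not merely MLS, because we never used that the columns of $A$ are bases of $\M$.
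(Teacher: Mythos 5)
Your proposal is correct and follows the paper's own proof exactly: take $\N$ to be the partition matroid determined by the columns, take the rows as the sets $F_i$, and apply Theorem~\ref{main}. The extra bookkeeping you supply (working on the ground set of cell-positions and pulling $\M$ back along the labelling map, which is a standard matroid induction) is a legitimate formalization of a point the paper leaves implicit, not a departure from its argument.
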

\begin{proof}
Let $A$ be a row MLS of degree $n$ over a matroid $\M$. The result follows from Theorem~\ref{main} by taking $\N$ as the partition matroid defined by the columns of $A$.
\end{proof}

\section*{Acknowledgments}
The authors thank two anonymous referees for their insightful comments and for their substantial contribution to the clarity of the manuscript.

\bibliographystyle{amsplain}
\providecommand{\bysame}{\leavevmode\hbox to3em{\hrulefill}\thinspace}
\providecommand{\MR}{\relax\ifhmode\unskip\space\fi MR }
\providecommand{\MRhref}[2]{%
  \href{http://www.ams.org/mathscinet-getitem?mr=#1}{#2}
}
\providecommand{\href}[2]{#2}

\end{document}